\newcommand{\Title}[1]{{\Large \bf \begin{center} {#1}
  \end{center}}\vspace*{1mm}}
\newcommand{\Author}[1]{{\bf \begin{center} {#1}
  \end{center}}\vspace*{1mm}}
\newtheorem{thm}{Theorem} %[subsection] % numbers theorems and lemmas accordingly
\newtheorem{cor}[thm]{Corollary}
\newtheorem{lem}[thm]{Lemma}
\begin{document}
\Title{\Large Free Minor Closed Classes and the Kuratowski
theorem. } \Author{ Dainis ZEPS \footnote{This research is
supported by the grant 97.0247 of Latvian Council of Science.}
\footnote{Author's address: Institute of Mathematics and Computer
Science, University of Latvia, 29 Rainis blvd., Riga, Latvia.
{dainize@cclu.lv}} }
\begin{abstract}
Free-minor closed classes \cite{kra 94} and free-planar graphs
\cite{ze 90} are considered. Versions of Kuratowski-like theorem
for free-planar graphs  and Kuratowski theorem for planar graphs
are considered.
\end{abstract}

 We are using usual definitions of the graph theory \cite{gt xx}.  Considering graph topologically and Kuratowski theorem, we use the notion of minor following the theory of Robertson and Seymour\cite{kra 94}. We say, that a graph $G$ is a minor of a graph $H$, denoting it by $G \prec H$,  if  $G$ can be obtained from $H$ by edge contractions from a subgraph of $H$, i.e. $G$ can be obtained by vertex deletions, edge deletions and edge contractions from $H$.

A class of graphs $A$ is minor closed, if  from $G \in A$ and $ H \prec G$  follow that $ H \in A$.

The set of forbidden minors of a class $A$ is denoted by  $F(A)$
which is equal to  $\lfloor \{G \mid G \not\in A\}\rfloor$, where
$\lfloor  B  \rfloor$ contains only minimal minors of $B$:
$\lfloor B \rfloor \triangleq \{G \mid H \in B \wedge H \prec G
\Rightarrow H \cong G\}$.

$N_{\circ}(B)$ denotes the minor closed class with $B$ as its set
of forbidden minors, i.e. $N_{\circ}(B) \triangleq \{G \mid
\forall H \in B: H \not\prec G\}$. In other words, we may say,
that $N_{\circ}(B)$ is a minor closed class generated by its
forbidden minors in $B$. For example, $N_{\circ}( K_5,K_{3,3})$ is
the class of planar graphs, as it is stated by Kuratowski theorem.

Another interesting example is {\em free-planar graphs} \cite{ze 90}. A planar graph is called free-planar, if after adding an arbitrary edge it remains to be planar. In \cite{ze 90} without a proof is acclaimed, that  the class of free-planar graphs is equal to $N_{\circ}( K_5^-,K_{3,3}^-)$, and  its characterization in terms of the permitted 3-connected components is given. In this paper we give a proof of this characterization.

In \cite{kra 94} a generalization of the notion of free-planar graphs is suggested. We denote by  $Free(A)$ the class of graphs that consist of all graphs which should belong to $A$ after adding an arbitrary edge to them.
It is easy to see, that, if $A$ is minor closed, then $Free(A)$ is minor closed too \cite{kra 94}. Because of this we use to say, that $Free(A)$ is {\em free-minor-closed-class} for a minor closed class $A$.

In \cite{kra 94} Kratochv\'{\i}l proved a theorem: $$F(Free(A)) =
\lfloor F(A)^- \cup F(A)^{\odot}\rfloor ,$$ where $B^- \triangleq
\{G-e \mid G \in  B, e \in E(G)\}$ and $B^{\odot} \triangleq \{H
\mid H \cong G \odot v, G \in B, v \in V(G)\}$ and operation $
\odot $ [in its application $G \odot v$] denotes a non unique
splitting of vertex $v$ in $G$, which is the opposite operation to
edge addition and its contraction [in result giving vertex $v$].

We may formulate the unproved statement of \cite{ze 90} as a theorem for class of planar graphs $Planar$:

\begin{thm}\label{fpg} $Free(Planar) = N_{\circ}( K_5^-,K_{3,3}^-)$.
\end{thm}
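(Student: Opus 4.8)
The plan is to derive the statement directly from the theorem of Kratochv\'{\i}l quoted above, specialized to the minor closed class $A = Planar$. Since $Planar$ is minor closed, $Free(Planar)$ is minor closed as well, and a minor closed class is recovered from its set of forbidden minors by $N_{\circ}$; hence it suffices to prove that $F(Free(Planar)) = \{K_5^-, K_{3,3}^-\}$. By the Kuratowski theorem $F(Planar) = \{K_5, K_{3,3}\}$, so Kratochv\'{\i}l's formula reads
$$F(Free(Planar)) = \lfloor \{K_5, K_{3,3}\}^- \cup \{K_5, K_{3,3}\}^{\odot} \rfloor .$$
First I would evaluate the two ingredients. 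Because $K_5$ and $K_{3,3}$ are edge transitive, deleting one edge gives a graph unique up to isomorphism, so $\{K_5, K_{3,3}\}^- = \{K_5^-, K_{3,3}^-\}$, exactly the two graphs appearing in the claim.

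The heart of the argument is to show that every graph coming from the splitting part $\{K_5, K_{3,3}\}^{\odot}$ is discarded by the operator $\lfloor\cdot\rfloor$. This is purely formal: by the definition of $\odot$ as the inverse of ``edge addition and its contraction'', if $H \cong G \odot v$ then contracting the new edge of $H$ returns $G$, so $G \prec H$; moreover $H$ has exactly one vertex more than $G$, hence $G \not\cong H$ and $G$ is a \emph{proper} minor of $H$. Therefore $G^-$ is a proper minor of $H$ too, and $G^- \in \{K_5, K_{3,3}\}^-$ whenever $G \in \{K_5, K_{3,3}\}$; so $H$ is not minimal in $\{K_5, K_{3,3}\}^- \cup \{K_5, K_{3,3}\}^{\odot}$ and is removed by $\lfloor\cdot\rfloor$. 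It then remains only to verify that $K_5^-$ and $K_{3,3}^-$ themselves are kept, i.e. that no other member of the union is a proper minor of either of them. This is a finite bookkeeping check: $K_5^-$ has $5$ vertices and $K_{3,3}^-$ has $6$, whereas every graph in $\{K_5\}^{\odot}$ has $6$ vertices and every graph in $\{K_{3,3}\}^{\odot}$ has $7$; a minor cannot have more vertices than its host, and a minor on equally many vertices is a spanning subgraph (no contraction was used). Comparing vertex numbers rules out almost everything, the coincident cases are settled by edge counts ($K_5 \odot v$ has at least $11$ edges, $K_{3,3}^-$ only $8$), and finally $K_{3,3}^- \not\prec K_5^-$ (too many vertices) while $K_5^- \not\prec K_{3,3}^-$ because any $5$-vertex minor of $K_{3,3}^-$ is obtained by one edge contraction and so has at most $7 < 9$ edges. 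Hence $\lfloor \{K_5, K_{3,3}\}^- \cup \{K_5, K_{3,3}\}^{\odot} \rfloor = \{K_5^-, K_{3,3}^-\}$, giving $F(Free(Planar)) = \{K_5^-, K_{3,3}^-\}$ and therefore $Free(Planar) = N_{\circ}(K_5^-,K_{3,3}^-)$.

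I do not expect a genuine obstacle here: the entire mathematical content is carried by Kratochv\'{\i}l's formula together with the Kuratowski theorem, and everything else is elementary. The only points requiring care are the correct use of the definition of $\odot$ in the form ``$G$ is obtained from $G\odot v$ by a single edge contraction'' (this is what makes the splittings non-minimal), and the small finite verification that $K_5^-$ and $K_{3,3}^-$ are mutually incomparable in the minor order and lie below none of the splitting graphs; both are routine.
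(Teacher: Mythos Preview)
Your proposal is correct and follows exactly the route the paper indicates for Theorem~\ref{fpg}: the paper also proves it by ``direct application of the theorem of Kratochv\'{\i}l'' together with Kuratowski's theorem, enumerating the graphs in $\{K_5,K_{3,3}\}^- \cup \{K_5,K_{3,3}\}^{\odot}$ (its Fig.~\ref{fig0}) and observing that only $K_5^-$ and $K_{3,3}^-$ survive the $\lfloor\cdot\rfloor$ operator. Your write-up is in fact a bit more explicit than the paper's, since you give the general reason (each $G\odot v$ has $G^-$ as a proper minor) rather than relying on the figure, but the argument is the same.
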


It is convenient to call the graphs $ K_5^-,K_{3,3}^-$ -- reduced Kuratowski subgraphs (or minors or graphs).

\placedrawing{FIGURE0.LP}{Graphs received applying the theorem of Kratochv\'{\i}l to Kuratowski graphs.}{fig0}

Now, direct application of  the theorem of Kratochv\'{\i}l gives the proof of  theorem \ref{fpg}, that has been   already shown in  \cite{kra 94}. All possible graphs obtainable following the theorem are in fig. \ref{fig0}.

In \cite{kra 94} Kratochv\'{\i}l suggested to prove Kuratowski's theorem from its weaker version for free-planar graphs. We do this here in two ways. One way -- first specifying the class generated from reduced Kuratowski minors and then showing that it coincides with  the class of free-planar graphs and then proving Kuratowski theorem itself. Second way -- we prove Kuratowski theorem for free planar graphs directly, showing that with slight alteration this proof fits  for a complete class of planar graphs too.

Let us set  $FP=N_{\circ}( K_5^-,K_{3,3}^-)$ and start with proving, that graphs belonging to the class $FP$ are free-planar, i.e. an extra edge does not make them nonplanar. Here we should explain how we are going to use Kuratowski theorem during the time we prove it. From a fact that $G$ has a Kuratowski graph as minor we conclude that it is non-planar, i.e. we use the weak direction of Kuratowski theorem. Otherwise we conclude graphs planarity directly embedding it in the plane in  cases when the graph is small or built up from 3-connected components in a certain way.

\begin{thm}\label{mth} For $\forall G \in FP$ and $\forall e \not\in E(G)$ $G+e$ is planar.
\end{thm}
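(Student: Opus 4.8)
\emph{Proof plan.} The plan is to prove the contrapositive, using only the easy half of the Kuratowski theorem (a graph with a $K_5$ or $K_{3,3}$ minor is non-planar). First note that every $G \in FP$ is itself planar: since $K_5^- \prec K_5$ and $K_{3,3}^- \prec K_{3,3}$, a graph with no $K_5^-$ and no $K_{3,3}^-$ minor has no $K_5$ and no $K_{3,3}$ minor. So suppose for contradiction that $G \in FP$ while $G+e$ is non-planar for some $e = uv \notin E(G)$; then $G+e$ has a minor $K \in \{K_5, K_{3,3}\}$. I would fix $K$ together with a model of it in $G+e$ realized by a subgraph $H' \subseteq G+e$ with as few edges as possible. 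Minimality then makes each branch set $B_w$ ($w \in V(K)$) induce a tree in $H'$ all of whose leaves are endpoints of connecting edges, and leaves exactly one connecting edge per edge of $K$.

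The next step is to locate $e$ inside $H'$. It must occur there, for otherwise $H' \subseteq G$ would display $K$ as a minor of the planar graph $G$. If $e$ is the connecting edge chosen for some edge of $K$, then $H' - e \subseteq G$ realizes $K$ with one edge deleted; by edge-transitivity of $K_5$ and $K_{3,3}$ this graph is $K_5^-$ or $K_{3,3}^-$, so $G$ already has a reduced Kuratowski minor, a contradiction. Otherwise $e = uv$ is an edge of the tree $B_w$ for some $w$, and deleting it splits $B_w$ into subtrees $P \ni u$ and $Q \ni v$. Minimality then forces each of $P, Q$ to contain an endpoint of a connecting edge: a subtree on at least two vertices has at least two leaves, hence one distinct from $u$ (resp.\ $v$), and such a vertex is a leaf of $B_w$; while $P = \{u\}$ makes $u$ a leaf of $B_w$. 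Hence $\{B_{w'} : w' \ne w\} \cup \{P, Q\}$ is a model in $G$ of the graph $K^{*}$ obtained from $K$ by splitting $w$ into two vertices whose neighbourhoods partition the $\deg_K(w)$ neighbours of $w$ into two nonempty blocks.

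It then remains to check that $K^{*}$ always contains $K_5^-$ or $K_{3,3}^-$ as a minor, contradicting $G \in FP$ and completing the proof. For $K = K_{3,3}$ the split is $(2,1)$, and the side of size $2$ together with the two untouched vertices of its part spans $K_{3,3}$ minus a single edge. For $K = K_5$ the split is $(3,1)$ or $(2,2)$; in the $(3,1)$ case the side of size $3$ together with the four untouched vertices spans $K_5$ minus an edge. I expect the balanced $(2,2)$ split of $K_5$ to be the one case requiring a real idea: writing this $K^{*}$ on vertices $a,b,c,d,x_1,x_2$ with $\{a,b,c,d\}$ inducing a $K_4$ and $x_1,x_2$ adjacent to $\{a,b\}$ and $\{c,d\}$ respectively, one finds no $K_5^-$, but the bipartition with sides $\{a,b,x_2\}$ and $\{c,d,x_1\}$ exhibits $K_{3,3}$ minus exactly the edge $x_1x_2$, i.e.\ $K_{3,3}^-$. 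In every case $G$ then has a reduced Kuratowski minor, the desired contradiction.

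In effect this reproduces, in a self-contained way, the bookkeeping behind fig.~\ref{fig0} — that every graph obtained from $K_5$ or $K_{3,3}$ by deleting an edge or by splitting a vertex has a reduced Kuratowski minor — without appealing to Kratochv\'{\i}l's theorem, which is the whole point of giving an independent proof here.
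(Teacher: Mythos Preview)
Your argument is clean and the case analysis at the end is correct, but it does not do what you say it does. You announce that you will use \emph{only} the easy direction of Kuratowski, yet the very next sentence --- ``$G+e$ is non-planar, so $G+e$ has a minor $K\in\{K_5,K_{3,3}\}$'' --- is precisely the hard direction. Everything downstream (locating $e$ in a minimal model of $K$, splitting a branch set, checking that every edge-deletion or vertex-split of $K_5$ or $K_{3,3}$ contains a reduced Kuratowski minor) is fine, and indeed it is exactly the computation behind fig.~\ref{fig0} and Kratochv\'{\i}l's theorem applied to $F(Planar)=\{K_5,K_{3,3}\}$. So what you have written is a correct proof of Theorem~\ref{mth} \emph{assuming Kuratowski's theorem}. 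The paper, however, proves Theorem~\ref{mth} precisely in order to deduce Kuratowski's theorem from it (Theorems~\ref{Kfpg} and~\ref{dkth1}); with your argument that deduction becomes circular.

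The paper's route is genuinely different and avoids this trap. Instead of starting from a hypothetical Kuratowski minor in $G+e$, it classifies $FP$ structurally: it shows that the only properly $3$-connected members of $FP$ are wheels $W_k$ and the prism $\overline{C_6}$ (via the $\xi$-lemma, the $m_4$-lemma, and the $3$-chain corollary), determines which of their edges may be subdivided while staying in $FP$, and then verifies planarity of $G+e$ by direct embedding of these small pieces and their $2$-sums. No appeal to the hard direction of Kuratowski is made, so the subsequent derivation of Kuratowski's theorem is non-circular. Your approach is shorter and more conceptual, but it buys that economy by assuming the theorem the paper is trying to reach; the paper's approach is longer and more hands-on, but self-contained in the intended sense.
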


Let us prove this theorem in several steps: firstly, enumerating by several theorems all possible graphs belonging to $FP$ and thereafter, by direct check of each graph (or class of graphs) stating the assumption of the theorem.

Let us denote by $\xi$ (see fig.~\ref{fig1}) a particular graph $K_{2,3}$ with an extra hanging edge added to the vertex [$s$ with hanging end $t$] of degree $2$. Let vertices in $K_{2,3}$ of degree $3$ be denoted $x$ and $y$. Let the remaining vertices of degree $2$ be $u$ and $v$.

\placedrawing{FIGURE1.LP}{Graph $\xi$}{fig1}

Let us denote by $m_i  \space (i>0)$ (see fig.~\ref{fig2}) a graph, that actually is a multiedge of degree $i$ with $i-1$  (elementary) subdivided edges (naming it  {\em $i$-multiedge}), e. g. $m_1 \cong K_2, m_2 \cong C_3, m_3 \cong K_4^-$.

\placedrawing{FIGURE2.LP}{Graph $m_i, i=1,...,4$}{fig2}

\begin{thm}\label{xi}{\em [Subgraph $\xi$ theorem]} \ \ \
If $G$ in $FP$ is 3-connected, then $\xi$ is not its minor.
\end{thm}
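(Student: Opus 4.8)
The plan is to argue by contradiction. Assume $G$ is $3$-connected, $G\in FP=N_{\circ}(K_5^-,K_{3,3}^-)$, and $\xi\prec G$; I will produce a $K_5^-$- or $K_{3,3}^-$-minor of $G$, contradicting $G\in FP$. Since $\xi$ has maximum degree $3$, being a minor coincides with being a topological minor, so $G$ contains a subdivision $\widehat{\xi}$ of $\xi$, which I fix to have as few vertices as possible (this will keep the configurations arising below under control). Write $X,Y,S_0,T$ for the branch vertices corresponding to $x,y,s,t$; let $P_u,P_v$ be the two $X$--$Y$ threads through the images of $u,v$, let $P_s$ be the $X$--$S_0$--$Y$ thread, and let $P_t$ be the pendant $S_0$--$T$ thread.

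Now I would exploit $3$-connectivity through the observation that every vertex of $G$ has degree at least $3$. Applying this to $T$, which has degree $1$ in $\widehat{\xi}$, together with the fan form of Menger's theorem, produces two internally disjoint paths $Q_1,Q_2$ from $T$ to $V(\widehat{\xi})\setminus\{T\}$, ending at distinct vertices $r_1,r_2$ and meeting $\widehat{\xi}$ only at their endpoints. The heart of the argument is a case analysis on which threads $r_1$ and $r_2$ lie on. In the ``spread'' case --- $r_1$ an interior vertex of $P_u$ and $r_2$ an interior vertex of $P_v$ --- one reads off directly a subdivision of $K_{3,3}$ with branch sides $\{X,Y,T\}$ and $\{S_0,r_1,r_2\}$: splitting $P_u,P_v,P_s$ at $r_1,r_2,S_0$ and adjoining $P_t,Q_1,Q_2$ supplies the nine connecting paths, and then $K_{3,3}^-\prec K_{3,3}\prec G$ finishes this case.

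The remaining configurations are the real content, and I expect them to be the main obstacle. When $r_1,r_2$ bunch onto one thread, or at least one of them lands on $P_s$ or on $P_t$, the direct reassembly yields only a subdivision of $K_4$, or a minor isomorphic to the triangular prism --- both of which are free-planar --- so no contradiction yet. To get past this I would invoke degree at least $3$ a second time, at a degree-$2$ vertex of $\widehat{\xi}$ on the offending thread (for instance the image of $u$ or of $v$), extract one more path leaving $\widehat{\xi}$, and re-run the analysis; iterating, and genuinely using $3$-connectivity rather than mere $2$-connectivity, forces a subdivision of $K_{3,3}$, $K_{3,3}^-$, or $K_5^-$. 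Fixing $\widehat{\xi}$ vertex-minimal keeps the number of configurations to be inspected finite and small, and the delicacy of the proof is precisely this bounded bookkeeping rather than any single reassembly.
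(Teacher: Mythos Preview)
Your route is genuinely different from the paper's, and the difference is exactly where your argument stalls. The paper does not pull a two-path fan out of $T$; instead it first isolates a clean intermediate lemma (Lemma~\ref{ls}): no $3$-connected graph in $FP$ contains the $4$-multiedge $m_4$ as a minor, because $3$-connectivity forces a chain through the three degree-$2$ vertices of $m_4$ avoiding its two hubs, and that chain completes a $K_5^-$. With that lemma in hand the $\xi$ theorem needs only \emph{one} extra path from $T$: if that path lands on $P_u$ or $P_v$ one reads off $K_{3,3}^-$ immediately with parts $\{X,Y,T\}$ and $\{r,V,S_0\}$; if it lands on $P_s$ or at $X$ or $Y$, contracting the two ends of $P_s$ down to the touch point and to $S_0$ produces four internally disjoint $X$--$Y$ paths, i.e.\ $m_4$, and the lemma closes the case.

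Two concrete problems with your analysis. First, your case boundary is off: as soon as a single $r_i$ is interior to $P_u$ or $P_v$ you already have $K_{3,3}^-$, so the ``spread'' case is not the only easy one and the hard region is narrower than you describe. Second, and this is the real gap, the residual case in which both $r_1,r_2$ lie on $P_s\cup\{X,Y\}\cup P_t$ is not handled. Take $r_1=X$, $r_2=Y$: what you assemble is $K_{2,4}$ on $\{X,Y\}\cup\{U,V,S_0,T\}$ together with the edge $S_0T$, and that graph lies in $FP$ (it splits along the $2$-cut $\{X,Y\}$ into two triangles and a $K_4$), so no forbidden minor appears from $Q_1,Q_2$ alone. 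Your remedy is to ``iterate'' by pulling another path off $U$ or $V$, but you give no argument that this terminates or that the growing configuration must eventually contain $K_5^-$ or $K_{3,3}^-$; vertex-minimality of $\widehat\xi$ constrains $\widehat\xi$, not the auxiliary paths you keep adjoining. This open-ended iteration is precisely the work that the paper's $m_4$ lemma packages into a single step.
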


Let us first prove a lemma.

\begin{lem}\label{ls} If $G$ in $FP$ is 3-connected, then $m_4$ is not its minor.
\end{lem}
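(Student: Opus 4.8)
The plan is to argue by contradiction. Suppose $G\in FP$ is $3$-connected and $m_4\prec G$; I shall exhibit $K_5^-$ or $K_{3,3}^-$ as a minor of $G$, contradicting $G\in FP=N_{\circ}(K_5^-,K_{3,3}^-)$. Fix an inclusionwise minimal model of $m_4$ in $G$: connected, pairwise disjoint branch sets $A,B,C_1,C_2,C_3$ together with chosen edges $e_0$ (joining $A$ and $B$) and $e_i^A,e_i^B$ (joining $A$, resp.\ $B$, to $C_i$). Minimality forces $A$ and $B$ to be trees whose leaves are all attachment vertices, and each $C_i$ to be a path (possibly a single vertex) between its two attachment vertices; writing $Q_i$ for the $A$--$B$ path running through $C_i$ and $T=V(A)\cup V(B)$, the subgraph $H=A\cup B\cup Q_1\cup Q_2\cup Q_3$ (together with the edge $e_0$) has the property that $H-T$ has exactly the three nonempty components $\operatorname{int}Q_1,\operatorname{int}Q_2,\operatorname{int}Q_3$.

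Next I use $3$-connectivity of $G$ to see that $G$ is not close to being just $H$. First case: the three sets $\operatorname{int}Q_i$ still lie in three distinct components $D_1,D_2,D_3$ of $G-T$. Then each neighbourhood $N_G(V(D_j))$ is contained in $T$ and has at least three vertices (else $G$ has a $2$-cut), and each $D_j$ is joined to $A$ at $a_j$ and to $B$ at $b_j$; so when $|T|=3$, contracting $A$, $B$ and the $D_j$ produces a $K_{3,3}$-minor directly, hence a $K_{3,3}^-$-minor, and we are done. When $|T|>3$ one first has to use the tree-minimality of $A,B$ (splitting a blob across a non-trivial tree-edge, or contracting a contractible edge and inducting on $|V(G)|$) to reduce to the situation $|T|=3$ — this reduction is the first point that needs care. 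In the remaining case two of the $\operatorname{int}Q_i$, say those of $Q_1$ and $Q_2$, lie in one component of $G-T$, so $G$ contains a path $P$ joining an interior vertex $w_1$ of $Q_1$ to an interior vertex $w_2$ of $Q_2$ with $P$ internally disjoint from $T\cup\operatorname{int}Q_1\cup\operatorname{int}Q_2$.

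Contracting $A,B$ to vertices $\mathbf{a},\mathbf{b}$, we now see four parallel $\mathbf{a}$--$\mathbf{b}$ paths ($e_0$ and $\tilde Q_1,\tilde Q_2,\tilde Q_3$) plus the chord $P$; the key observation is that this alone only yields $K_5$ with two edges deleted, which is \emph{not} a forbidden minor for $FP$, so one extra connection is genuinely not enough. But this configuration has $\{\mathbf{a},\mathbf{b}\}$ (that is, $T$, when $|T|=2$) as a $2$-separator isolating $\operatorname{int}Q_3$, so by $3$-connectivity of $G$ there is a second, essentially independent reconnecting path $P'$, and the proof finishes with a finite case analysis on where the ends of $P$ and $P'$ land — another petal interior, the vertex $\mathbf{a}$ or $\mathbf{b}$, or a fresh component of $G-T$ — in each case writing down at most six branch sets that realise $K_5^-$ or $K_{3,3}^-$. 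For instance, two chords among the three petal interiors turn the $m_4$-skeleton into $K_5$ minus exactly the one petal-pair carrying no chord, i.e.\ $K_5^-$, while chords running into $\mathbf{a},\mathbf{b}$ or into fresh vertices yield $K_{3,3}^-$. The main obstacle is precisely this case analysis together with the reduction of the second paragraph: one must extract two independent extra connections from $3$-connectivity (a single one only gives $K_5^{--}\in FP$) and check every way they can meet the $m_4$-skeleton, and one must separately handle large blobs $A,B$, where $T$ may legitimately be a small separator and the tree structure of $A$ and $B$ must be exploited.
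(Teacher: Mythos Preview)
Your plan is workable in principle but it is far heavier than the paper's argument, and the reason is that you miss the one structural observation the paper exploits.  The paper simply notes that $m_4$ is \emph{exactly} $K_5^-$ with two edges removed at a single degree-$4$ vertex: in your notation, if the hubs are $\mathbf a,\mathbf b$ and the petals $c_1,c_2,c_3$, then the missing edges are $c_1c_2$ and $c_1c_3$, both incident with $c_1$.  Hence one chain from $c_2$ to $c_3$ \emph{through} $c_1$, avoiding $\mathbf a,\mathbf b$, restores both missing edges simultaneously and gives $K_5^-$ outright.  You instead search for two independent chords among the petals, which is why you (correctly) discover that a single chord only yields $K_5^{--}\in FP$ and are then forced into a second invocation of $3$-connectivity and the case analysis you flag as the ``main obstacle''.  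All of that disappears once you aim for a single $c_2$--$c_1$--$c_3$ chain.

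Your concern about large branch sets $A,B$ is, however, a genuine point the paper glosses over: the paper's one-line appeal to $3$-connectivity tacitly treats the hubs as single vertices, i.e.\ treats the $m_4$ model as topological, so that removing the two hubs leaves the three petals in one component.  Your careful set-up with $T=V(A)\cup V(B)$ and the split on $|T|$ is aimed at exactly this gap.  But even here the paper's target makes life easier: rather than your $K_{3,3}$ argument and reduction for $|T|\ge 3$, it suffices to show that the three petal branch sets can be linked by a tree in $G-T$ (or else to exhibit $K_{3,3}^-$ directly from three components each with three attachments in $T$).  So: keep your rigour about branch sets, but replace the two-chord search and its case analysis by the paper's single through-$c_1$ chain; the proof then collapses to a few lines.
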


\begin{proof}  Let us assume, that $G$ is 3-connected, has no one reduced
Kuratowski minor, but has  4-multiedge as its minor. But, let us
note, that $m_4$ as a minor is equivalent to $K_5^-$ minus two
incident edges at a vertex of degree four. Further, because of
3-connectivity, these absent edges should be recompensed by a
chain [, uniting two vertices of degree two and going through the
third one and avoiding vertices of degree three (condition of
3-connectivity)]  (see fig.~\ref{fig3}). Thus, existence of
4-multiedge implies existence of   $K_5^-$ too. \\
\end{proof}

\placedrawing{FIGURE3.LP}{Graph $K_5^-$ with two [dashed] eliminated edges
is equal to $m_4$.}{fig3}

\begin{proof}
{\em  [Proof of the subgraph $\xi$ theorem]} \ \ \ We can not
unite $t$ with any vertex outside the chain $x..s..y$,  without
giving $K_{3,3}^-$, nor unite $t$ with $x$ or $y$, because uniting
$t$ with, say, $y$ and contracting  $x..s$, we get $m_4$.
Furthermore, we can not unite $t$ with  vertices inside the chain
$x .. s .. y$, because contracting the subchains of this chain
from ends until the touch vertex and  $s$ we get $m_4$.  Thus $G$
can not have any minor isomorphic to  $\xi$.
\end{proof}

The fact that $m_4$ is forbidden for graphs in $FP$ can be formulated in the following assertion.

\begin{cor}\label{3ch}{\em [3-chain corollary]}
Let $G$ be 3-connected in $FP$. Then $G$ is isomorphic to $K_n$, $n<5$ or every pair of vertices are joined by $3$ disjoined chains that contain all vertices of the graph and the remaining edges join inner vertices of different chains.
\end{cor}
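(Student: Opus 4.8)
The plan is to read this off from Lemma~\ref{ls} together with $3$-connectivity, the point being that $m_4 \prec H$ is exactly the statement that some two vertices of $H$ are joined by four internally disjoint paths (contract each of the four paths to a single edge). So Lemma~\ref{ls} says: in a $3$-connected $G\in FP$ no two vertices are joined by four internally disjoint paths, while by Menger's theorem $3$-connectivity gives at least three such paths between any two vertices. Hence every pair $u,v$ is joined by \emph{exactly} three internally disjoint chains. If $|V(G)|\le 4$, then $3$-connectivity forces $G\cong K_4$, one of the listed $K_n$ with $n<5$; so from now on assume $|V(G)|\ge 5$, in which case $G\not\cong K_n$ because $K_n$ with $n\ge 5$ contains $K_5^-$ (for $n=5$) or $K_{3,3}^-$ (for $n\ge 6$) as a subgraph and so is not in $FP$. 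It remains to upgrade ``three internally disjoint chains'' to ``three chains covering all vertices with the remaining edges well placed''.

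Fix a pair $u,v$ and, among all triples $(P_1,P_2,P_3)$ of internally disjoint $u$--$v$ chains, pick one for which $|V(P_1)\cup V(P_2)\cup V(P_3)|$ is as large as possible. I claim it covers $V(G)$. If not, let $C$ be a component of $G$ with the union of the three chains removed. By $3$-connectivity $C$ has at least three neighbours in the union, and these cannot all lie in the two-element set $\{u,v\}$, so $C$ reaches, by a path internally disjoint from the chains, an interior vertex of some chain, say of $P_1$. If every chain has length at least two the three chains form a subdivision of $K_{2,3}$ between $u$ and $v$, and the path into $C$ supplies the pendant edge, giving a subdivision of $\xi$ --- impossible by Theorem~\ref{xi}. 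If in addition $C$ reaches interior vertices of two \emph{different} chains, contracting the resulting path segments exhibits a $K_4$ on four branch vertices $u,v$ and the two attachment points, together with the blob $C$ adjacent to three of them, i.e.\ a $K_5^-$ minor; and if $C$ has two attachments on a single chain one reroutes that chain through $C$ and contradicts maximality. The genuinely delicate point is the degenerate case $uv\in E(G)$, in which one chain may be the single edge $uv$ so that the $K_{2,3}$-subdivision argument breaks down and one must instead extract $m_4$ or $K_5^-$ by hand from the edge $uv$, the two longer chains, and the component $C$; this is where I expect the real work to be.

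Finally, assuming the three chains cover $V(G)$, consider an edge $e\notin P_1\cup P_2\cup P_3$. If $e=uv$, then $e$ together with the three chains gives four internally disjoint $u$--$v$ paths, contradicting Lemma~\ref{ls}. If $e$ joins $u$ (or $v$) to an interior vertex of a chain, or joins two interior vertices of the \emph{same} chain, then either the sub-segment cut off by $e$ together with $3$-connectivity forces a separating pair of vertices --- impossible --- or one can exchange part of a chain for a route through $e$ to obtain a valid triple with strictly larger coverage or strictly fewer ``misplaced'' edges, reducing to a situation already handled. Pushing this exchange/separator bookkeeping through is routine but somewhat fiddly, and yields precisely the asserted structure. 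Overall I expect the coverage step, and within it the case $uv\in E(G)$, to be the main obstacle; the rest is careful bookkeeping with Lemma~\ref{ls}, Theorem~\ref{xi} and $3$-connectivity.
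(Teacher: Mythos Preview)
The paper supplies no proof of this corollary at all: it is stated immediately after Lemma~\ref{ls} with the sentence ``The fact that $m_4$ is forbidden for graphs in $FP$ can be formulated in the following assertion,'' and then the text moves on. So there is nothing to compare your argument against; you have already written more than the paper does, and your overall scheme---Menger plus Lemma~\ref{ls} to get exactly three internally disjoint $u$--$v$ chains, then Theorem~\ref{xi} to force coverage when all three chains have length at least two---is correct and is presumably what the author had in mind.

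The case you single out as delicate, $uv\in E(G)$, is not a real obstacle. With $P_1=uv$ and $P_2,P_3$ of length $\ge 2$, a leftover component $C$ has at least three attachments on $P_2\cup P_3\cup\{u,v\}$: if two lie in the interior of the same $P_i$ you reroute and contradict maximality; if $C$ reaches both $u$ and $v$ you get a fourth internally disjoint $u$--$v$ chain and hence $m_4$; otherwise $C$ meets $\mathrm{int}(P_2)$, $\mathrm{int}(P_3)$ and exactly one of $u,v$, and contracting $C$ and the four arcs of the cycle $P_2\cup P_3$ yields $K_4$ with an extra vertex of degree three, i.e.\ $K_5^-$. Your ``fiddly bookkeeping'' for the remaining edges likewise collapses to one observation: a chord inside a single $P_i$, or an edge from $u$ (or $v$) to an interior vertex of some $P_i$, becomes a fourth $u$--$v$ chain after contracting the appropriate subchain of $P_i$, again contradicting Lemma~\ref{ls}. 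So nothing beyond Lemma~\ref{ls}, Theorem~\ref{xi}, Menger and $3$-connectivity is required.
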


Still, we need one more theorem that would help us to determine, which graphs belong to $FP$.

\begin{thm} For every 3-connected $G \in FP$ there exists an edge $e$, that $G-e$ is outer planar.
\end{thm}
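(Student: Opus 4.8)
The plan is to run the structure given by the 3-chain corollary (Corollary~\ref{3ch}). If $G\cong K_n$ with $n<5$ the statement is immediate: $K_1,K_2,K_3$ are already outer planar, and $K_4-e$ is the ``diamond'' (two triangles sharing an edge), which on four vertices and five edges has neither a $K_4$ nor a $K_{2,3}$ minor, hence is outer planar. In the remaining case $G$ has two poles $a,b$, three internally disjoint chains $C_1,C_2,C_3$ from $a$ to $b$ whose union contains every vertex, and a family of chords, each joining an interior vertex of one chain to an interior vertex of another. I would first record that inside each ``band'' --- the pair of chains $C_i\cup C_j$ --- the chords cannot interleave: two interleaving chords between $C_i$ and $C_j$, together with the third chain $C_k$, yield a $K_5$ or a $K_{3,3}$ minor, hence a reduced Kuratowski minor, contradicting $G\in FP$. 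So the chords in each band are linearly ordered (nested), which is exactly what an outer-planar reassembly will need.

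Next I would pick the edge $e$ to delete. If some chain is the single edge $ab$ (i.e.\ $a\sim b$), take $e=ab$; then $C_i\cup C_j$ is the outer cycle and the chords of the one remaining nontrivial band sit on it in nested fashion, which routes without crossings. If $a\not\sim b$, every chain has an interior vertex and, by 3-connectivity, every interior vertex carries a chord; here I would use that $m_4$ is not a minor (Lemma~\ref{ls}) and that $\xi$ is not a minor (Theorem~\ref{xi}) to force the global chord pattern to be ``linear'' --- roughly, that the chords line up consistently along the chains and that, for some chain $C_k$, the edge $e$ joining a pole to the first interior vertex of $C_k$ can be cut so that $C_k$ falls apart into two pendant pieces. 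I would then write the vertices of $G-e$ around a circle by walking the cycle $C_i\cup C_j$ from $a$ to $b$ and back, splicing the pole-side piece of the broken $C_k$ in right after $a$ and the other piece in right before $b$, and check that every surviving chord becomes a non-crossing chord of that circle; the nesting inside each band plus the linearity of the pattern are precisely what make this verification succeed, and a graph drawn with all vertices on one face is outer planar (no $K_4$ or $K_{2,3}$ minor).

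The main obstacle is the structural step in the case $a\not\sim b$: a priori all three bands may carry chords and the pattern could be ``two dimensional'', in which case no single near-pole deletion would open the graph up. The real work is to exclude this by exhibiting $m_4$, $\xi$, or a reduced Kuratowski graph as a minor whenever the chords fail to align, which I expect to come down to a short case analysis on how many bands are nonempty and on the relative positions (there can be only a few, by the same forbidden-minor constraints) of the chords within each band. Once that analysis collapses the pattern to the linear shape, the explicit circular ordering above closes the argument; and, as the surrounding discussion anticipates, the same embedding construction --- combined with the planar analogue of Corollary~\ref{3ch} --- should carry over to the full class of planar graphs with only minor changes.
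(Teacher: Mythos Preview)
Your approach diverges from the paper's at exactly the point you flag as ``the main obstacle''. You fix a single pair of poles $a,b$, study the chord pattern across the three bands, and hope a case analysis will collapse the picture to something linear; this is left as a plan rather than carried out. The paper never attempts that analysis. Instead it argues by contradiction: assume $G-e$ is outer planar for no edge $e$. Then for \emph{every} pair of vertices $s,t$, each of the three $s$--$t$ chains supplied by Corollary~\ref{3ch} must have an interior vertex (otherwise deleting the single-edge chain leaves an outer-planar graph --- this is precisely your case $a\sim b$, which you handle correctly and which the paper invokes only with the phrase ``otherwise it should be outer planar''). Now pick interior vertices $x,y,z$, one from each chain, and \emph{re-apply} Corollary~\ref{3ch} to the pair $x,y$: again three chains, again each with an interior vertex, and one of them must thread through the $z$-chain avoiding $s$ and $t$. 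A third application, this time to the pair $x,z$, can no longer be accommodated without producing a $K_5^-$ minor, contradicting $G\in FP$.

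So the idea you are missing is not a sharper chord analysis but a change of poles: iterate the 3-chain corollary on interior vertices rather than dissecting the bands for a fixed $a,b$. Your constructive route may well be completable --- the subsequent classification (Theorem~\ref{pw}) shows only wheels and the prism occur, so the chord patterns really are tame --- but it is the longer road, and as it stands the hard case is only sketched. The paper's contradiction argument reaches the conclusion without ever naming the edge $e$ or drawing anything.
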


\begin{proof}  Let us assume $G$ different from $K_n, n<5$ and the theorem is not right, i.e. $G -e$ is not outer planar. Because of  3-chain corollary and 3-connectivity condition, arbitrary pair of vertices $s$ and $t$ are joined by just three chains, where all vertices are positioned on these chains. By the incorrectness assumption every of these chains contain at least one inner vertex, otherwise it should be outer planar. Let us denote these chains $s..x..t$, $s..y..t$ and $s..z..t$. Then, by the same arguments $x$ and $y$ join similar chains too. It is possible, supposing that all inner vertices of $s..z..t$ now are on $x..y$ which avoids $s, t$. But the same argument must be right also for a pair, say, $x$ and $z$. It is impossible without giving $K_5^-$.
\end{proof}

Now we are ready to enumerate 3-connected graphs belonging to $FP$.

\begin{thm}\label{pw}{\em [Prism- and wheel-graph theorem]} \ \ \
The only properly 3-connected graphs belonging to $FP$ are the prism-graph [$\overline{C_6}$] and the wheel-graph [$W_k (k>2)$].
\end{thm}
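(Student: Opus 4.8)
The plan is to combine the three structural results already established — the 3-chain corollary (Corollary \ref{3ch}), the subgraph $\xi$ theorem (Theorem \ref{xi}), and the preceding theorem that every properly 3-connected $G \in FP$ has an edge $e$ with $G - e$ outer planar — and squeeze the possibilities down to exactly the prism $\overline{C_6}$ and the wheels $W_k$. First I would fix a properly 3-connected $G \in FP$ (so $G \not\cong K_n$, $n<5$) and pick an edge $e=st$ with $G-e$ outer planar. An outer planar 2-connected graph is a cycle with non-crossing chords; combining this with the 3-chain corollary, the vertices of $G-e$ lie on a single cycle $Z$, and $G$ is obtained by adding the chord $st$. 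So $G$ is a cycle plus one chord plus whatever non-crossing chords $G-e$ already had. The task is then to show the chord structure is forced.

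Next I would exploit 3-connectivity and the forbidden minor $\xi$. Any chord of the outer cycle, together with the cycle, produces a $\theta$-graph; a second chord that is "nested" or "skew" relative to the first in the wrong way creates a subdivision of $K_4$ and then, using a third independent chain guaranteed by the 3-chain corollary, a $\xi$-minor — which is forbidden by Theorem \ref{xi}. The key case analysis is on how the chords of the outer planar graph $G-e$ interact with each other and with the new chord $st$. I expect two surviving configurations: either all chords emanate from a single common vertex (a "fan"), in which case adding $st$ and using 3-connectivity forces that vertex to be adjacent to everything, giving a wheel $W_k$; or the chords pair up the two "sides" of the cycle in a parallel (non-nested) fashion, which by 3-connectivity and the bound coming from $m_4$ being forbidden (the 3-chain corollary limits how many parallel chains can occur) collapses to the prism $\overline{C_6}$, i.e. exactly three rungs. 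The bound ruling out a larger "ladder" (more than three rungs) comes from the fact that four parallel chains of length $\geq 2$ between the same pair would contain $m_4$, contradicting Lemma \ref{ls}.

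Finally I would verify the converse: that $\overline{C_6}$ and each $W_k$ really do lie in $FP$, i.e. adding any edge keeps them planar — this is a direct finite check for the prism and an easy argument for the wheel (adding a chord between two rim vertices of $W_k$ yields a planar graph, as one can re-embed), and one checks no reduced Kuratowski minor $K_5^-$ or $K_{3,3}^-$ appears, equivalently that they are free-planar.

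The main obstacle I anticipate is the middle step: organizing the case analysis on chord interaction cleanly enough that every configuration other than fan-type and single-ladder-of-three-rungs is killed by either a $K_5^-$ minor, a $K_{3,3}^-$ minor, or a $\xi$-minor, without gaps. In particular one must be careful that "properly 3-connected" (as opposed to the wheels and prism being somehow degenerate cases) is used correctly, and that subdivided edges along the cycle are handled — the chains in the 3-chain corollary may have inner vertices of degree $2$, so the arguments must be phrased minor-theoretically (contract degree-$2$ vertices first) rather than in terms of adjacency, exactly as in the proof of Theorem \ref{xi}.
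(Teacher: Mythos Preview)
Your plan uses the right ingredients --- the outer-planar edge $e=st$ and the 3-chain corollary --- but organises the case analysis more elaborately than necessary. The paper's proof is shorter and more direct: having chosen $e=st$ with $G-e$ outer planar, it applies the 3-chain corollary \emph{to the pair $(s,t)$ itself}. Since one of the three chains is the edge $st$, the remaining two chains $P_1,P_2$ carry all other vertices, and every further edge of $G$ is a cross-edge between an inner vertex of $P_1$ and one of $P_2$. The whole argument is then a case split on $l=\min(|P_1|,|P_2|)$: $l=1$ is impossible; $l=2$ forces (by 3-connectivity) every inner vertex of the longer chain to be adjacent to the single inner vertex of the short one, giving $W_k$; $l=3$ with the other chain also of length $3$ gives the prism; and if the lengths are $\geq 3$ and $\geq 4$, 3-connectivity forces a pair of ``crossing'' cross-edges, which yields a $K_{3,3}^-$ minor. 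No general chord-interaction analysis and no further appeal to the $\xi$-theorem are needed.

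One specific step in your plan would not work as stated. Your mechanism for ruling out a longer ladder --- that a fourth rung would create an $m_4$ minor --- is not correct: a longer \emph{aligned} ladder (two $s$--$t$ paths of length $\geq 4$ joined only by non-crossing matching edges, together with $st$) contains no $m_4$ between any pair, but it fails to be 3-connected (the two endpoints of a middle rung form a $2$-cut). What actually kills the longer case is that 3-connectivity forces some pair of cross-edges to cross on the cycle, and that crossing pattern produces $K_{3,3}^-$ directly; this is exactly the paper's last case. Finally, the converse check you include (that $\overline{C_6}$ and $W_k$ lie in $FP$) is not part of the paper's proof of this theorem; it is deferred to the later completion of the proof of Theorem~\ref{mth}.
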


\begin{proof} Let us assume $G$ different from $K_n, n<4$.
Let us choose the edge $e=s$ (joining vertices $s$ and $t$) that $G-e$ is outer planar.
Then two chains $s..t$ contain all other vertices of the graph $G$.
Let $l$ be the length of the shortest of these chains.
 Case l=1 is not  possible.

For l=2 all cases with the number of inner vertices on the other chain $i>0$ are possible, giving graphs $W_k$ $(k=i+2>3) [wheel \space graph] $.

Let the length of both chains be $3$.  This gives a possible graph $\overline{C_6} [prism-graph] $.

Let both chains be longer than $2$ excluding both being equal to $3$.  Let the chains be $s .. x_1 .. x_2 .. t$ and $s .. y_1 .. y_2..y_3 .. t$. If we join $x_2$ with $y_1$ or $y_2$ then $x_1$ joined with $y_3$ would give $K_{3,3}^-$. By symmetry all other cases are excluded too.
\end{proof}

Up to now, we have considered the cases of  3-connective graphs in $FP$. Further, let us consider other cases and let us state, which edges in the 3-connected graphs eventually can be subdivided and which not in order to get different from 3-connected members of $FP$. Surely, by this reasoning we must get all non 3-connected graphs \cite{ze 90}, because the edges that can be subdivided are just those [and only those], that can become virtual edges, when the graph is divided into 3-connected components.

\begin{thm}{\em [Prism graph edge-subdivision theorem]}
The edges of the triangles in the prism-graph are the only edges that can not be subdivided to get new graphs belonging to $FP$.
\end{thm}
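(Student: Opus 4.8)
The plan is to argue directly from the definition $FP=N_\circ(K_5^-,K_{3,3}^-)$, i.e.\ by exhibiting or ruling out reduced Kuratowski minors, and to exploit the symmetry of the prism. Write $\overline{C_6}$ with top triangle $a_1a_2a_3$, bottom triangle $b_1b_2b_3$, and the three ``rung'' edges $a_ib_i$; its automorphism group is $S_3\times S_2$, which is transitive on the six triangle edges and transitive on the three rung edges, so it is enough to treat one edge of each kind.

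\textbf{Triangle edges cannot be subdivided.} Let $G'$ be $\overline{C_6}$ with the triangle edge $a_1a_2$ subdivided by a new vertex $c$, so that $a_1$--$c$--$a_2$ is a path and $a_1a_2\notin E(G')$. I would show $K_{3,3}^-\prec G'$ by contracting the single edge $b_1b_2$: letting $\beta$ be the resulting branch vertex and suppressing the parallel pair thereby created to $b_3$, the contracted graph has vertex set $\{a_1,a_2,a_3,b_3,c,\beta\}$ and its edges are exactly the complete bipartite set between $\{c,a_3,\beta\}$ and $\{a_1,a_2,b_3\}$ with the one pair $cb_3$ missing --- that is, it is $K_{3,3}^-$. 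Hence $G'\notin FP$. Because any graph having $G'$ as a minor (in particular every further subdivision of $a_1a_2$, and every graph obtained by attaching additional structure across the ends of that edge) still has the $K_{3,3}^-$ minor, none of these lie in $FP$ either; this is precisely why a triangle edge can never occur as a virtual edge of a prism inside a member of $FP$.

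\textbf{Rung edges can be subdivided.} Let $G'$ be $\overline{C_6}$ with the rung $a_1b_1$ subdivided by a vertex $c$; it has $7$ vertices and $10$ edges, and it is new (it is $2$- but not $3$-connected). It remains only to check that $G'$ has neither $K_5^-$ nor $K_{3,3}^-$ as a minor. For $K_5^-$: in any model the seven vertices split into exactly five connected branch sets, so at least two of the ten edges lie inside branch sets and at most eight can run between them, too few for the nine edges of $K_5^-$ (a model using fewer than all seven vertices only fares worse). For $K_{3,3}^-$ (six vertices, eight edges, triangle-free): a six-vertex minor of $G'$ arises from one vertex deletion or one edge contraction. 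Deleting any vertex other than $c$ removes at least three edges, leaving fewer than eight; and $G'-c$ is $\overline{C_6}$ minus a rung, which still contains the triangle $a_1a_2a_3$, so it is not $K_{3,3}^-$. If instead an edge $e$ is contracted, note that $G'$ has the two vertex-disjoint triangles $a_1a_2a_3$ and $b_1b_2b_3$ and $e$ meets at most one of them, so the contraction still contains a triangle; running through the few orbits of $e$ under $\mathrm{Aut}(G')$ one sees that the contracted graph either has only eight edges (so, being triangle-bearing, it is not the triangle-free $K_{3,3}^-$) or has nine edges but keeps two edge-disjoint triangles (so no single further edge deletion can make it triangle-free). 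Hence $K_{3,3}^-\not\prec G'$, so $G'\in FP$ and each rung can indeed be subdivided within $FP$.

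\textbf{Main obstacle.} The triangle-edge direction costs nothing once the contraction $b_1b_2$ is spotted. The real work is the $K_{3,3}^-$-exclusion for the subdivided rung: one has to confirm, for each orbit of a contracted edge, that the surviving triangle structure together with the precise edge count blocks $K_{3,3}^-$; organizing the case analysis by those orbits of $\mathrm{Aut}(G')$ keeps it to a handful of lines.
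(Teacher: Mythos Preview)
Your proof is correct. The triangle-edge direction matches the paper's: both exhibit $K_{3,3}^-$ directly, the paper by pointing to a picture and you by naming the contraction $b_1b_2$ (your bipartition $\{c,a_3,\beta\}$ versus $\{a_1,a_2,b_3\}$ with $cb_3$ missing checks out).

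Where you genuinely diverge from the paper is the rung-edge direction. The paper disposes of it in one parenthetical remark --- that in the prism with subdivided rung(s) \emph{there is no cycle carrying two non-elementary bridges} --- implicitly invoking a structural characterisation of when $K_{3,3}^-$ can occur as a minor. You instead run a self-contained counting and case analysis: an edge-count rules out $K_5^-$, and for $K_{3,3}^-$ you reduce to a single vertex deletion or a single edge contraction and then kill each orbit by observing that at least one triangle of $G'$ survives (so the triangle-free $K_{3,3}^-$ cannot appear). Your argument is longer but fully elementary, requiring no outside structural lemma; the paper's is a one-liner but leans on an unproved bridge criterion and, as written, also covers simultaneous subdivision of several rungs --- a slight strengthening you do not address, though the theorem as stated only asks about one edge at a time.
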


\begin{proof} Putting a new vertex on an edge of a triangle of the
prism-graph immediately gives $K_{3,3}^-$  as a minor. See
fig.~\ref{fig4}. [Names of the vertices in $K_{3,3}$ could be seen
in the fig.~\ref{fig6}]

Putting a new vertex (or  new vertices) on an edge (or edges)  that does not belong to triangle does not give  $K_{3,3}^-$. [There does not exist a cycle with two non elementary bridges.]
\end{proof}

\placedrawing{FIGURE4.LP}{Prism graph with the triangle-edge subdivided, thus giving a minor $K_{3,3}^-$.}{fig4}

\begin{thm}{\em [Wheel graph edge-subdivision theorem]}
The spike edges in the prism-graph are the only edges that can not be subdivided to get new graphs belonging to $FP$.
\end{thm}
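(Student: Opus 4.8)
The plan mirrors the prism case: I split the edges of $W_k$ into the $k$ spokes and the $k$ rim edges and handle the two kinds separately, taking $k\ge 4$ throughout (the degenerate wheel $W_3\cong K_4$ already sits among the $K_n$, $n<5$ dealt with above). Two claims have to be proved: (i) subdividing any spoke yields a graph with $K_{3,3}^-$ as a minor, hence a graph not in $FP$; and (ii) subdividing rim edges --- any of them, any number of times --- yields a graph that is still in $FP$, i.e.\ still has no $K_5^-$ and no $K_{3,3}^-$ minor.

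For (i) I would write down an explicit model. Let $h$ be the hub and $v_1v_2\cdots v_kv_1$ the rim, subdivide the spoke $hv_1$ by a new vertex $w$ (so $N(w)=\{h,v_1\}$), and take the six branch sets split into the two triples
$$\{w\},\ \{v_2\},\ \{v_k\}\qquad\text{and}\qquad\{h\},\ \{v_1\},\ P,\qquad P:=\{v_3,v_4,\ldots,v_{k-1}\},$$
where $P$ is a sub-path of the rim (a single vertex if $k=4$). These sets are pairwise disjoint and connected, and with the two triples as the colour classes of $K_{3,3}$ one checks that eight of the nine cross pairs span an edge of the graph; the lone exception is the pair $(\{w\},P)$, because $w$'s only neighbours are $h$ and $v_1$. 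Thus the graph contains $K_{3,3}^-$ as a minor, and since a minor survives further subdivisions, the same holds if rim edges are subdivided as well. Hence every graph obtained from $W_k$ by subdividing a spoke (and possibly other edges) lies outside $FP$.

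For (ii), the $K_5^-$ half is immediate: $K_5^-$ has minimum degree $3$, so it is a minor of a subdivision of a graph only when it is already a minor of that graph, and $W_k\in FP$ (Theorem~\ref{pw}) has no $K_5^-$ minor. The $K_{3,3}^-$ half is the real content, and there I would argue directly on the cycle structure rather than ``un-subdividing'' back to $W_k$ --- that reduction is illegitimate for a minor of minimum degree $2$ like $K_{3,3}^-$. The plan: deleting the hub from the subdivided graph $G'$ leaves a single cycle $C'$, so in a hypothetical $K_{3,3}^-$ model at least five of the six branch sets (all of them, if the hub is unused) are pairwise disjoint arcs of $C'$; their mutual adjacency graph is then a subgraph of a cycle of the same length, hence has very few edges. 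One then compares this with the number of edges of $K_{3,3}^-$ not incident with the branch set carrying the hub --- eight if the hub is unused, and otherwise six or five according as that branch set maps to a degree-$2$ or a degree-$3$ vertex of $K_{3,3}^-$ --- and derives a contradiction in each case; in the degree-$3$ case, which is tight, the five required edges would have to be exactly the five edges of a $5$-cycle, while the relevant five-vertex subgraph of $K_{3,3}^-$ always contains a vertex of degree $\le 1$.

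I expect the degree-$3$ subcase of that final count to be the main obstacle. Part (i) and the $K_5^-$ half of (ii) are bookkeeping; the difficulty is that for $K_{3,3}^-$ one cannot contract the subdivisions away, so the whole argument must be run on the cycle $C'=G'-h$, and the edge count has to be pushed just far enough to close the tight case cleanly.
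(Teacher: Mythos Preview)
Your argument is correct, and part~(i) matches the paper's: both exhibit an explicit $K_{3,3}^-$ model once a spoke is subdivided. Part~(ii), however, takes a far longer route than the paper does. The paper disposes of rim subdivisions in one line: if you subdivide a rim edge of $W_k$ by a new vertex $w$ and then \emph{add} the edge $hw$ from the hub, the result is $W_{k+1}$; hence the subdivided graph is a subgraph of $W_{k+1}\in FP$ (by Theorem~\ref{pw}), and $FP$ is minor-closed. Iterating, any number of rim subdivisions yields a subgraph of some larger wheel, so both forbidden minors are excluded simultaneously and no separate $K_5^-$ argument is needed.

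Your direct cycle-count for $K_{3,3}^-$ would also go through --- the tight degree-$3$ subcase closes exactly as you anticipate, since deleting a degree-$3$ vertex from $K_{3,3}^-$ always leaves a vertex of degree~$1$, which cannot lie on a $5$-cycle --- and the $\delta(H)\ge 3$ lemma you invoke for the $K_5^-$ half is valid. The trade-off is that the paper's argument leans on the already established membership $W_{k+1}\in FP$, whereas yours is a self-contained structural analysis; in context the embedding-in-a-larger-wheel trick is clearly the intended shortcut and saves all of the casework you are bracing for.
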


\begin{proof}  Putting a new vertex on a spike edge of the
wheel-graph gives immediately $K_{3,3}^-$ as a minor. See
fig.~\ref{fig5}.

Putting a new vertex on a rim-edge does not give  $K_{3,3}^-$. [Union of the new vertex with the center by an edge gives a wheel graph of a higher degree.]
\end{proof}

\placedrawing{FIGURE5.LP}{Wheel-graph with the spike-edge subdivided, thus giving a
minor $K_{3,3}^-$.}{fig5}

\begin{thm}{\em [Tetrahedron edge-subdivision theorem]}
Two edges of $K_4$ which subdivided gives  $K_{3,3}^-$ as a minor can not in the same time be subdivided to get new graphs belonging to $FP$.
\end{thm}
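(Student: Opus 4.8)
The plan is to reduce the statement to a single finite verification by exploiting the symmetry of $K_4$. The automorphism group $\mathrm{Aut}(K_4)=S_4$ acts on the unordered pairs of edges of $K_4$ with exactly two orbits: pairs of \emph{adjacent} edges (sharing a common vertex) and pairs of \emph{independent} edges (forming a perfect matching of $K_4$). It therefore suffices to subdivide one representative pair of each type and inspect the result; I expect the pair named in the theorem --- the one whose subdivision produces $K_{3,3}^-$ --- to be precisely the independent pair.

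So first I would take $V(K_4)=\{1,2,3,4\}$ and subdivide the two independent edges $12$ and $34$ by new vertices $a$ and $b$, obtaining the graph $H$ on $\{1,2,3,4,a,b\}$ with the eight edges $1a$, $a2$, $3b$, $b4$, $13$, $14$, $23$, $24$. A direct check gives the degree sequence $(3,3,3,3,2,2)$ and the proper $2$-colouring with colour classes $\{1,2,b\}$ and $\{a,3,4\}$, so $H$ is bipartite with both parts of size $3$; comparing $H$ with the complete bipartite graph on these two parts, the unique missing cross-edge is $ab$. Hence $H\cong K_{3,3}^-$. Since every graph is a minor of itself and $K_{3,3}^-$ is a forbidden minor for $FP=N_{\circ}(K_5^-,K_{3,3}^-)$, the graph $H$ does not belong to $FP$, and so the two independent edges of $K_4$ cannot both be subdivided while staying in $FP$, which is the claim.

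For completeness --- the statement as phrased does not strictly need this --- I would also record that subdividing two \emph{adjacent} edges, say $12$ and $13$ by $a$ and $c$, yields a graph on six vertices and eight edges containing the $5$-cycle $1\,a\,2\,3\,c$, hence non-bipartite, so it cannot have the bipartite graph $K_{3,3}^-$ as a minor; and since eight edges on six vertices can never be contracted or deleted down to the nine edges of $K_5^-$ on five vertices, it cannot have $K_5^-$ as a minor either, so it stays in $FP$. This confirms that the independent pair is the one the theorem singles out, and it dovetails with the earlier prism- and wheel-graph subdivision results.

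There is no real obstacle here: the argument is the reduction to two cases via $\mathrm{Aut}(K_4)$ together with the identification $H\cong K_{3,3}^-$. The only step that warrants a little care is carrying out that identification rigorously --- exhibiting the bipartition and checking that exactly one of the nine possible cross-edges is absent --- so that we genuinely land on the named forbidden minor rather than merely a graph that looks like it.
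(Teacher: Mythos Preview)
Your main argument is correct and is exactly what the paper does: the paper's entire proof is ``Trivially. See fig.~\ref{fig6}'', where the figure shows $K_4$ with two (independent) edges subdivided equalling $K_{3,3}^-$. Your explicit bipartition $\{1,2,b\}\cup\{a,3,4\}$ and identification of the single missing cross-edge $ab$ is precisely the verification that picture encodes, with the $\mathrm{Aut}(K_4)$ reduction added for tidiness.

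One caution about your supplementary paragraph on the \emph{adjacent} pair: the inference ``non-bipartite, hence cannot have the bipartite graph $K_{3,3}^-$ as a minor'' is not valid in general --- minors of non-bipartite graphs can certainly be bipartite. The conclusion is nevertheless correct here, but for a different reason: both your adjacent-subdivision graph and $K_{3,3}^-$ have exactly $6$ vertices and $8$ edges, and any nontrivial minor operation strictly decreases at least one of these counts, so $K_{3,3}^-$ could be a minor only if the two graphs were isomorphic, which your odd cycle rules out. Your $K_5^-$ edge-count argument is fine as stated. Since you already flag this paragraph as not required for the theorem, the slip does not affect the proof proper.
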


\begin{proof}\  Trivially. See fig.~\ref{fig6}.
\end{proof}

We are now ready to specify all the class of graphs $FP$ by enumerating all possible graphs in it. In fact, we name all possible 3-connected graphs in $FP$ with additionally telling which edges in them might become virtual as if the graphs that are not 3-connected would be divided into 3-connected components.

Dealing with the 3-connected components, we must admit , that they are in general multigraphs \cite{ze 90}.

\begin{cor}\label{lc} Graphs or their 3-connected components
that belong to $FP$ are \cite{ze 90}:

\rule{50pt}{0pt} 0) $C_n$ or $m_n, n>2$ with all edges possibly being virtual edges;

\rule{50pt}{0pt} 1) $W_3$ with spike edges possibly being virtual edges;

\rule{50pt}{0pt} 2) $W_k, k>2$ with rim edges possibly being virtual edges;

\rule{50pt}{0pt} 3) $\overline{C_6}$ with possible virtual edges not belonging to triangles.

\end{cor}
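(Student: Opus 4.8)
The plan is to obtain Corollary~\ref{lc} as a synthesis of the theorems above; the only genuinely new ingredient is the passage from $3$-connected graphs to arbitrary members of $FP$ through their $3$-connected components.

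First I would reduce to $2$-connected graphs. Since $K_5^-$ and $K_{3,3}^-$ are $2$-connected and a $2$-connected minor of a graph is a minor of one of its blocks, $G\in FP$ iff every block of $G$ is in $FP$. For a $2$-connected $G$ I would then invoke the Tutte decomposition into $3$-connected components --- polygons $C_n$, bonds $m_n$, and simple $3$-connected graphs --- glued along virtual edges, and prove the following equivalence: $G\in FP$ iff every component $H$ lies in $FP$ and, for every virtual edge $e$ of $H$, the graph obtained from $H$ by subdividing $e$ also lies in $FP$. The ``only if'' half is routine: replacing the whole graph on the far side of $e$ by a path through the two ends of $e$, and then contracting that path to a single edge (respectively to a single subdivided edge), exhibits $H$ (respectively $H$ with $e$ subdivided) as a minor of $G$; one does this simultaneously at every virtual edge of $H$. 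Call a component \emph{admissible} if it is in $FP$, and a virtual edge \emph{admissible for} $H$ if $H$ with it subdivided is in $FP$.

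Granting this equivalence, the corollary drops out. By Theorem~\ref{pw} together with the trivial cases $K_n$ ($n<4$), the only admissible components are the polygons $C_n$ and bonds $m_n$ ($n>2$, with $m_3\cong K_4^-$), the wheels $W_k$ ($k>2$, with $W_3\cong K_4$), and the prism $\overline{C_6}$. For a polygon or a bond \emph{every} edge is an admissible virtual edge, because subdividing an edge of $C_n$ (respectively $m_n$) yields $C_{n+1}$ (respectively $m_{n+1}$), still in $FP$; this is item~(0). For $\overline{C_6}$ the Prism-graph edge-subdivision theorem says exactly that the admissible virtual edges are the ones not on a triangle --- item~(3). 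For $W_k$ with $k\ge 4$ the Wheel-graph edge-subdivision theorem identifies the rim edges as exactly the admissible virtual edges --- item~(2). Finally $W_3\cong K_4$ is special: by the Tetrahedron edge-subdivision theorem no two independent edges of $K_4$ may be subdivided together, so a maximal admissible set of virtual edges avoids all independent pairs, hence is either a triangle (the ``rim'', which is item~(2) read at $k=3$) or a star at one vertex (the ``spikes'') --- item~(1).

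The main obstacle is the ``if'' half of the equivalence: that assembling admissible components along admissible virtual edges can never create a $K_5^-$ or a $K_{3,3}^-$ straddling two or more components. Here one uses that $K_5^-$ is $3$-connected and that every $2$-cut of $K_{3,3}^-$ merely peels off a single vertex. Given a putative minor model of one of these graphs and a virtual edge $e=uv$ that it ``uses'' (i.e.\ has branch sets on both sides of the $2$-cut $\{u,v\}$), the pair of branch vertices containing $u$ and $v$ must form a cut of the model, which forces the model to be $K_{3,3}^-$ with exactly one branch set on the far side of $e$; that branch set together with the far side of $e$ can then be contracted to a single subdivision vertex on $e$, so the forbidden minor already occurs in the component $H$ with $e$ subdivided, contradicting admissibility of $e$ for $H$. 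Iterating over the virtual edges reduces every model to a single (subdivided) component. This is precisely the content of the remark preceding the corollary --- that the edges which may be subdivided are exactly those which may become virtual edges --- and turning it into a rigorous argument is the one step needing real care; everything else is bookkeeping over the finitely many component types listed above.
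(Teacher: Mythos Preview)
Your approach is exactly the paper's: pass to the Tutte decomposition into $3$-connected components and identify the admissible virtual edges with the subdividable ones. The paper's own proof is a single sentence asserting this identification and deferring to the preceding subdivision theorems and to \cite{ze 90}; you supply far more of the underlying argument, in particular the ``if'' direction, which the paper does not touch.

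There is, however, one genuine slip in your stated equivalence. You claim $G\in FP$ iff each component $H$ lies in $FP$ and $H$ with \emph{each individual} virtual edge subdivided lies in $FP$. That is too weak. Take $K_4$ as a component with two independent virtual edges: subdividing either one alone keeps you in $FP$, yet subdividing both produces $K_{3,3}^-$, and any $G$ realizing both virtual edges contains this minor. The correct criterion is that $H$ with \emph{all} of its virtual edges subdivided simultaneously lies in $FP$ --- and indeed your own ``only if'' argument (contract each far side to a path) already exhibits this simultaneous subdivision as a minor of $G$. You evidently know this, since your treatment of $W_3\cong K_4$ invokes the Tetrahedron theorem to forbid independent \emph{pairs} of virtual edges; but that conclusion does not follow from the equivalence as you formulated it. Once the equivalence is stated with simultaneous subdivision, your case analysis and your sketch for the ``if'' direction (using that $K_5^-$ is $3$-connected and that every $2$-cut of $K_{3,3}^-$ isolates a single vertex) go through.
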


\begin{proof}  Dividing the graph into 3-connected components, possible
virtual edges can be only these edges which can eventually be
subdivided, to give possible new members of $FP$.
\end{proof}

\begin{proof}{\em Completion of the proof of theorem \ref{mth}} \ \
\ Now, it can be immediately checked, that adding an edge to the
properly 3-connected graphs of $FP$, i.e. prism-graph and
wheel-graph, can not give a nonplanar graph. This does not need
use of Kuratowski theorem because we infer planarity from direct
implementation in the plane.

Further, looking through all cases of corollary \ref{lc}, immediately can be checked, that subdividing edges in the mentioned graphs, as it is allowed by the 3 last theorems, and adding an extra edge, can not give a  graph, that is not embeddable in the plane.
\end{proof}

\placedrawing{FIGURE6.LP}{Tetrahedron-graph with two edges subdivided equals to $K_{3,3}^-$.}{fig6}

Now the theorem is proved, saying that adding an edge to $G$ from $FP$ always gives a planar graph. We have proved that $FP$ is a subset of the class of free-planar graphs. Let $Planar$ be class of planar graphs. The result of theorem  \ref{mth} can be expressed in the following lemma.

\begin{lem}\label{lx} $FP \subseteq Free(Planar)$.
\end{lem}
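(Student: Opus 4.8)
The plan is straightforward: Lemma~\ref{lx} is nothing more than a restatement of Theorem~\ref{mth} in the language of the operator $Free$, so the proof is essentially a one-line unpacking of definitions. First I would recall that $Free(Planar)$ was defined to be the class of all graphs $G$ such that $G+e \in Planar$ for every edge $e \notin E(G)$; that is, $G \in Free(Planar)$ precisely when adding any single new edge to $G$ leaves it planar. (One should also note that $G$ itself is planar, since if $G$ has at least two non-adjacent vertices this follows by taking $G+e$ and deleting $e$, and the remaining small cases are trivial.)

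Next I would invoke Theorem~\ref{mth} directly: it asserts that for every $G \in FP$ and every $e \notin E(G)$, the graph $G+e$ is planar. Quantifying over $G$, this says exactly that every $G \in FP$ satisfies the membership condition for $Free(Planar)$, i.e. $G \in Free(Planar)$. Hence $FP \subseteq Free(Planar)$, which is the claim.

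There is really no obstacle here, since all the substantive work has already been done in the chain of results culminating in Theorem~\ref{mth} (the enumeration of $3$-connected members of $FP$ via the prism- and wheel-graph theorem, the edge-subdivision theorems, Corollary~\ref{lc}, and the final case check). The only thing to be slightly careful about is the edge cases in the definition of $Free(Planar)$ — a complete graph $K_n$ with $n \le 4$, or more generally a graph to which no edge \emph{can} be added — where the universally quantified condition is vacuously or trivially satisfied; these are already covered in the statement of Theorem~\ref{mth} since the quantifier ranges over all $e \notin E(G)$, an empty range when $G$ is complete. So the proof consists simply of observing that the conclusion of Theorem~\ref{mth} is literally the defining condition for containment in $Free(Planar)$.

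\begin{proof}
By definition, $Free(Planar)$ consists of exactly those graphs $G$ for which $G+e$ is planar for every $e \notin E(G)$. Theorem~\ref{mth} states precisely that every $G \in FP$ has this property. Therefore $FP \subseteq Free(Planar)$.
\end{proof}
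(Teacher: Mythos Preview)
Your proposal is correct and matches the paper's approach exactly: the paper states explicitly that ``the result of theorem~\ref{mth} can be expressed in the following lemma,'' i.e.\ Lemma~\ref{lx} is offered without a separate proof as a direct reformulation of Theorem~\ref{mth}. Your one-line unpacking of the definition of $Free(Planar)$ followed by the invocation of Theorem~\ref{mth} is precisely what the paper intends.
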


Furthermore, we want to show that these sets in fact coincide. For this purpose, the following lemma is useful.

\begin{lem} $K_5^-, K_{3,3}^- \in F(Free(Planar))$.
\end{lem}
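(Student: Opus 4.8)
The plan is to unwind the definition $F(A)=\lfloor\{G\mid G\notin A\}\rfloor$ with $A=Free(Planar)$, which amounts to checking two things for each of $K_5^-$ and $K_{3,3}^-$: that the graph itself is \emph{not} in $Free(Planar)$, and that every proper minor of it \emph{is} in $Free(Planar)$ (so that it is a minimal element of the complement of $Free(Planar)$, i.e.\ a forbidden minor).

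The first point is immediate. Taking $e$ to be the (unique up to isomorphism) missing edge, $K_5^-+e\cong K_5$ and $K_{3,3}^-+e\cong K_{3,3}$, and $K_5$, $K_{3,3}$ are non-planar by the easy direction of Kuratowski's theorem, which by the conventions fixed earlier in the paper we are allowed to invoke. Hence $K_5^-\notin Free(Planar)$ and $K_{3,3}^-\notin Free(Planar)$.

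For the second point the key observation is that $K_5^-$ and $K_{3,3}^-$ are incomparable in the minor order, for trivial counting reasons: $K_5^-$ has $9$ edges whereas $K_{3,3}^-$ has only $8$, and a minor never has more edges than its host, so $K_5^-\not\prec K_{3,3}^-$; dually $K_{3,3}^-$ has $6$ vertices and $K_5^-$ has $5$, and a minor never has more vertices than its host, so $K_{3,3}^-\not\prec K_5^-$. Now let $H$ be a proper minor of $K_5^-$. By antisymmetry of $\prec$ on finite graphs we cannot have $K_5^-\prec H$; and since $H\prec K_5^-$ while $K_{3,3}^-\not\prec K_5^-$, transitivity gives $K_{3,3}^-\not\prec H$. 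Thus $H\in N_{\circ}(K_5^-,K_{3,3}^-)=FP$, and Lemma~\ref{lx} gives $H\in Free(Planar)$. The argument for a proper minor of $K_{3,3}^-$ is symmetric, using $K_5^-\not\prec K_{3,3}^-$ in place of $K_{3,3}^-\not\prec K_5^-$.

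Combining the two points: $K_5^-$ and $K_{3,3}^-$ lie outside $Free(Planar)$ but all of their proper minors lie inside, so both belong to $F(Free(Planar))$. There is no real obstacle here; the only thing requiring care is the bookkeeping of the vertex and edge counts of the two reduced Kuratowski graphs and the correct appeal to Lemma~\ref{lx}, since all of the genuine planarity content has already been packaged into that lemma (and into the weak direction of Kuratowski's theorem).
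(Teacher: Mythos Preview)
Your proof is correct and takes a different route from the paper's. The paper establishes minimality by direct inspection: it enumerates the graphs obtained from $K_5^-$ and $K_{3,3}^-$ by deleting a single edge (four isomorphism types, displayed in a figure) and by a single vertex split/edge contraction (two further types), and checks that each of these is free-planar. You instead bypass all case analysis by observing that $K_5^-$ and $K_{3,3}^-$ are incomparable in the minor order (via the vertex and edge counts) and then invoking Lemma~\ref{lx}: any proper minor of either graph therefore lies in $FP=N_{\circ}(K_5^-,K_{3,3}^-)\subseteq Free(Planar)$. This is cleaner and makes explicit that the minimality is a purely formal consequence of Lemma~\ref{lx} once incomparability is noted; the paper's enumeration, on the other hand, is self-contained in the sense that it does not appeal to the structural work behind Lemma~\ref{lx}, but at the cost of relying on figures and a somewhat awkwardly phrased case check.
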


\begin{proof}  It is easy to see, that $K_5^-, K_{3,3}^-$ are
forbidden in $Free(Planar)$ --- addition of an appropriate edge
gives a nonplanar graph. Further, the corresponding elimination of
an edge in both graphs $K_5^-$ and $K_{3,3}^-$, gives four
possibilities for free planar graphs which are shown in
fig.~\ref{fig7}. The corresponding vertex split gives two
non-trivial possibilities [see fig.~\ref{fig7a}].
\end{proof}

\placedrawing{FIGURE7.LP}{$ K_{3,3}^-$ and $ K_5^-$ without an edge give four non isomorphic graphs}{fig7}

\placedrawing{FIGURE7A.LP}{$ K_5^-$ with a split vertex gives two nonisomorphic graphs}{fig7a}

Further, from two facts, $F(FP) = \{K_5^-, K_{3,3}^-\}$ and
$F(Free(Planar))$ is equal to \\  $ \{K_5^-, K_{3,3}^-, ...something \}$, there follows, that  $ Free(Planar) \subseteq  FP $.
Now, together with lemma \ref{lx} we might formulate, what may be called, the Kuratowski theorem for free-planar graphs.

\begin{thm}[Kuratowski-like theorem for free planar graphs]\label{Kfpg} \ \ \ \ \
   $$F(Free(Planar))= \{K_5^-, K_{3,3}^-\}.  $$
\end{thm}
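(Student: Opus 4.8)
The plan is to prove the two inclusions $F(Free(Planar)) \subseteq \{K_5^-, K_{3,3}^-\}$ and $\{K_5^-, K_{3,3}^-\} \subseteq F(Free(Planar))$ separately. The second inclusion is already handled by the preceding lemma: it shows that $K_5^-$ and $K_{3,3}^-$ lie in $Free(Planar)^c$ (adding the appropriate edge restores a Kuratowski graph, hence a nonplanar graph), while every proper minor of either graph is free-planar (the four edge-deletions in fig.~\ref{fig7} and the two vertex-splits in fig.~\ref{fig7a} are all free-planar, and minor-closedness of $Free(Planar)$ then covers every proper minor). Hence $K_5^-$ and $K_{3,3}^-$ are \emph{minimal} forbidden minors, so they belong to $F(Free(Planar))$.

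For the first inclusion I would use Lemma~\ref{lx}, $FP \subseteq Free(Planar)$, together with the fact that $F(FP) = \{K_5^-, K_{3,3}^-\}$ (which is the definition $FP = N_\circ(K_5^-, K_{3,3}^-)$ combined with the observation that neither reduced Kuratowski graph is a minor of the other, so the generating set is already the minimal one). Suppose $H \in F(Free(Planar))$. Then $H \notin Free(Planar)$, and since $FP \subseteq Free(Planar)$ we get $H \notin FP$; because $F(FP) = \{K_5^-, K_{3,3}^-\}$, $H$ has $K_5^-$ or $K_{3,3}^-$ as a minor. But $H$ is a \emph{minimal} element of the complement of $Free(Planar)$, and we have just shown $K_5^-, K_{3,3}^- \in F(Free(Planar)) \subseteq Free(Planar)^c$; minimality of $H$ then forces $H \cong K_5^-$ or $H \cong K_{3,3}^-$. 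Thus $F(Free(Planar)) \subseteq \{K_5^-, K_{3,3}^-\}$, and combined with the first paragraph we get equality.

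The one genuine subtlety — and the step I would treat most carefully — is the move from ``$H$ has $K_5^-$ or $K_{3,3}^-$ as a minor'' to ``$H$ is isomorphic to one of them.'' This uses minimality of $H$ in the complement: if $K_5^-$ (say) is a proper minor of $H$, then $K_5^- \prec H$ with $K_5^- \in Free(Planar)^c$ contradicts $H \in \lfloor Free(Planar)^c \rfloor$. So one must know $K_5^-, K_{3,3}^- \notin Free(Planar)$, which is exactly the easy direction established in the preceding lemma (add the missing edge, get $K_5$ or $K_{3,3}$, invoke the weak direction of Kuratowski). Everything else is bookkeeping with the definitions of $F(\cdot)$, $N_\circ(\cdot)$, and $\lfloor \cdot \rfloor$. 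Note also that this argument does \emph{not} require the full strength of Kratochv\'{\i}l's formula nor a complete description of $Free(Planar)$ beyond its forbidden minors; it only needs the inclusion $FP \subseteq Free(Planar)$ from Lemma~\ref{lx}, so it is self-contained within the ``first way'' of proving Kuratowski's theorem announced earlier.
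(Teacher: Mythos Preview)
Your argument is correct and matches the paper's: the paper likewise uses the preceding lemma to get $\{K_5^-,K_{3,3}^-\}\subseteq F(Free(Planar))$, then combines this with $F(FP)=\{K_5^-,K_{3,3}^-\}$ and Lemma~\ref{lx} to conclude (phrasing it as $Free(Planar)\subseteq FP$, hence $Free(Planar)=FP$, hence equal forbidden-minor sets). Your element-wise minimality argument for the reverse inclusion is just the unpacked form of the paper's set-level observation that $F(Free(Planar))\supseteq F(FP)$ forces $Free(Planar)\subseteq FP$.
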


In fact, as we have already seen in the beginning, this theorem would be easy got using
both traditional Kuratowski theorem and Kratochv\'{\i}l's theorem \cite{kra 94}, but
now we did this proof without  the use of these theorems.

Let us prove Kuratowski theorem from its weaker version, i.e. from
this Kuratowski-like theorem that we have just  proven.

\begin{thm}[Kuratowski theorem-version 1]\label{dkth1}
        $$F(Planar)= \{K_5, K_{3,3}\}.$$
\end{thm}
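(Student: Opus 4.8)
The plan is to deduce the classical Kuratowski theorem $F(Planar)=\{K_5,K_{3,3}\}$ from the already-established Kuratowski-like theorem for free-planar graphs (Theorem~\ref{Kfpg}), $F(Free(Planar))=\{K_5^-,K_{3,3}^-\}$, together with Kratochv\'{\i}l's theorem $F(Free(A))=\lfloor F(A)^- \cup F(A)^{\odot}\rfloor$. The idea is that $Planar$ is \emph{almost} recoverable from $Free(Planar)$: a graph is planar essentially when adding an edge keeps it "one step away" from non-planarity in a controlled way, and the forbidden minors of $Planar$ must be minor-minimal graphs whose every proper minor lies in $Free(Planar)$ but which themselves do not.

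First I would record the easy direction: $K_5$ and $K_{3,3}$ are non-planar, and every proper minor of each is planar (a routine check, or: deleting an edge from $K_5$ gives $K_5^-\in Free(Planar)\subseteq Planar$, and similarly for $K_{3,3}$; contractions only make things smaller), so $\{K_5,K_{3,3}\}\subseteq F(Planar)$, hence $N_\circ(K_5,K_{3,3})\supseteq Planar$ fails the wrong way — rather this inclusion gives $Planar \subseteq N_\circ(K_5,K_{3,3})$ is the nontrivial half. So the real work is to show $F(Planar)\subseteq\{K_5,K_{3,3}\}$, i.e. any minor-minimal non-planar graph $G$ is $K_5$ or $K_{3,3}$. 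Take such a $G$. Then $G$ is non-planar but every proper minor is planar. I claim $G\notin Free(Planar)$ (since $G$ itself is already non-planar, certainly adding an edge keeps it non-planar), yet every proper minor of $G$, being planar, need not lie in $Free(Planar)$ — so this alone is not enough, and here is where the argument must be sharpened.

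The key step is to run Kratochv\'{\i}l's formula backwards. Apply it with $A = Planar$: $F(Free(Planar)) = \lfloor F(Planar)^- \cup F(Planar)^{\odot}\rfloor$. By Theorem~\ref{Kfpg} the left side is $\{K_5^-,K_{3,3}^-\}$. Now suppose for contradiction that $F(Planar)$ contains some graph $G$ other than $K_5$ and $K_{3,3}$. Since $G$ is minor-minimal non-planar and $K_5, K_{3,3}\in F(Planar)$ already, $G$ has neither $K_5$ nor $K_{3,3}$ as a proper minor; as $G$ is connected (a minor-minimal non-planar graph is $2$-connected, in fact $3$-connected by a standard argument, which I would either cite or note follows from minimality) and has minimum degree at least $3$, the graph $G-e$ for any edge $e$ is a proper minor, hence planar; likewise any split $G\odot v$. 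Then $G-e \in F(Planar)^-$ for each $e$, so $\lfloor F(Planar)^-\cup F(Planar)^{\odot}\rfloor$ must "cover" $G-e$ from below by a member, i.e. there is $H\in\{K_5^-,K_{3,3}^-\}$ with $H\prec G-e$. Running this over all $e$ and using that $G$ is not $K_5$ or $K_{3,3}$, I would derive that $G$ itself has $K_5$ or $K_{3,3}$ as a minor — contradicting minor-minimality unless $G\cong K_5$ or $G\cong K_{3,3}$. Concretely: since $G\ne K_5, K_{3,3}$ and $G$ is non-planar, $G$ strictly contains (as a minor) a minor-minimal non-planar graph, but that minor-minimal graph has a reduced-Kuratowski graph sitting one edge-addition below it, which pushes a full Kuratowski graph into a proper minor of $G$.

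The main obstacle I expect is the last inference — turning "$K_5^-$ or $K_{3,3}^-$ is a minor of every $G-e$" into "$K_5$ or $K_{3,3}$ is a (proper) minor of $G$." This requires care: knowing $H^- \prec G-e$ for all $e$ does not immediately give $H \prec G$, because the reduced graph might embed using different edges of $G$ for different choices of $e$. The clean fix is to instead argue directly that $G$, being non-planar and connected, contains \emph{some} minor-minimal non-planar graph $G_0 \preceq G$; if $G_0 \ne G$ then $G_0$ is a proper minor of $G$, contradicting $G\in F(Planar)$; so $G = G_0$ is itself minor-minimal, and the whole point is then to show the \emph{only} minor-minimal non-planar graphs are $K_5$ and $K_{3,3}$ — which is exactly what we must prove, so this rephrasing is circular unless we inject new information. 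The genuine new information is Theorem~\ref{Kfpg}: I would use it by taking $G$ minor-minimal non-planar, picking an edge $e$ with $G - e$ planar and maximal (adding $e$ back makes it non-planar), so $G-e\notin Free(Planar)$, hence $G-e$ has a reduced Kuratowski minor $H$; then tracking the edge $e$ through the contraction/deletion sequence witnessing $H\prec G-e$ shows $e$ can be chosen to "complete" $H$ to a full Kuratowski graph as a minor of $G$, and minimality forces $G$ to be exactly that Kuratowski graph. Making the "tracking $e$" step rigorous — essentially the observation that $G\odot v$ and $G-e$ relate Kuratowski and reduced-Kuratowski graphs the same way the formula's $B^-$ and $B^\odot$ do — is the crux, and it is precisely the content that Kratochv\'{\i}l's theorem packages, so the cleanest writeup simply inverts that theorem symbolically and then verifies the four-or-five small cases of figures~\ref{fig7} and~\ref{fig7a} by hand.
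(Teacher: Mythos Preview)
Your proposal locates the right starting observation --- for a minor-minimal non-planar $G$, each $G-e$ is planar yet not in $Free(Planar)$, hence by Theorem~\ref{Kfpg} contains a reduced Kuratowski minor --- but both routes you offer to finish from there have genuine gaps. The ``invert Kratochv\'{\i}l symbolically'' route cannot succeed: the equation $\lfloor F(Planar)^-\cup F(Planar)^{\odot}\rfloor=\{K_5^-,K_{3,3}^-\}$ does not pin down $F(Planar)$, because for \emph{any} $G\in F(Planar)$ every $G-e$ and every $G\odot v$ is automatically non-free-planar and hence already has a reduced Kuratowski minor; adjoining a hypothetical extra obstruction $G$ to $\{K_5,K_{3,3}\}$ therefore never violates the formula. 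Checking the graphs of figures~\ref{fig7} and~\ref{fig7a} only re-establishes that $K_5^-,K_{3,3}^-$ are minimal in $F(Free(Planar))$, which is the wrong direction. The ``tracking $e$'' route you fall back on is, as you say yourself, the crux --- and you supply no mechanism: nothing forces the deleted edge $e$ to land between the two branch-sets of the model of $K_5^-$ (or $K_{3,3}^-$) in $G-e$ representing the missing adjacency, and in general it will not.

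The paper fills this gap with an idea absent from your outline. It first proves a separate lemma: if $H$ is a minimal non-planar graph distinct from the Kuratowski graphs, then $H$ minus \emph{any two} edges lies in $Free(Planar)$. It then argues combinatorially on the reduced Kuratowski minors sitting inside $H$: for each edge $i$ there is a reduced Kuratowski minor $K_i$ in $H-i$, and the two-edge lemma forces these copies to overlap in a constrained way (deleting $i$ and any $j\in K_i$ must kill $K_j$ too, so $i\in K_j$); starting from a fixed reduced Kuratowski minor $K_{ij}$ and two of its edges $i,j$, one then finds edges $l_i\in K_i$ and $l_j\in K_j$ whose deletion leaves $K_{ij}$ intact, contradicting the two-edge lemma. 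The two-edge free-planarity lemma and this overlap argument are the substantive content; your sketch reaches neither.
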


Let us first prove a lemma.

\begin{lem} Let $H$ be critical non-planar minor. Then $H$ minus
two arbitrary edges is free-planar.
\end{lem}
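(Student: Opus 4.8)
The plan is to convert the statement into one about forbidden minors by invoking the Kuratowski-like theorem for free-planar graphs (Theorem~\ref{Kfpg}): since $F(Free(Planar))=\{K_5^-,K_{3,3}^-\}$ and $Free(Planar)$ is minor closed, a graph is free-planar precisely when it has neither $K_5^-$ nor $K_{3,3}^-$ as a minor. Because $H$ is critical non-planar, $H-e$ is planar for every edge $e$, so $H-e-f$ is planar for any two edges $e,f$, being a subgraph of the planar graph $H-e$. Hence it remains only to show that $H-e-f$ contains no reduced Kuratowski minor, and this is what I would attack by contradiction.

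Before that I would record the elementary consequences of criticality that the argument rests on: in the chains $H-e-f\subseteq H-e\subseteq H$ and $H-e-f\subseteq H-f\subseteq H$ every graph except $H$ itself is planar, so both deletions are genuinely needed to destroy planarity; also $H/g$ is planar for every edge $g$; and one may assume $H$ has no cut-vertex, since otherwise a non-planar block would be a smaller non-planar minor. Now suppose $H-e-f$ has a minor $J\in\{K_5^-,K_{3,3}^-\}$, witnessed by branch sets in $V(H)$, and let $\bar J\in\{K_5,K_{3,3}\}$ be $J$ together with the single edge it lacks, say between branch sets $B_1$ and $B_2$. The key observation is that any edge of $H$ with one end in $B_1$ and the other in $B_2$ — or any vertex of $H$ outside the branch sets that is adjacent to both $B_1$ and $B_2$ — would, after contraction, upgrade the $J$-minor to a $\bar J$-minor, i.e.\ to a $K_5$- or $K_{3,3}$-minor.

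Then I would split into cases according to how $e$ and $f$ meet the branch sets. If $e$ (respectively $f$) joins $B_1$ to $B_2$, then $H-f\succ\bar J$ (respectively $H-e\succ\bar J$); but $H-f$ (resp.\ $H-e$) is a proper minor of $H$ and $\bar J$ is non-planar, contradicting criticality of $H$. If instead both $e$ and $f$ avoid the pair $\{B_1,B_2\}$, I would use the $2$-connectivity of $H$: since $H-e-f$ is planar while $H$ is not, $e$ and $f$ together must create a $K_5$- or $K_{3,3}$-subdivision in $H$, and either deleting the interior of a branch set not in $\{B_1,B_2\}$ or rerouting through the connection that $e$ and $f$ provide between $B_1$ and $B_2$ should exhibit a $K_5$- or $K_{3,3}$-minor already in $H-e$ or in $H-f$, again contradicting criticality. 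This last case — showing that when both deleted edges are ``off'' the missing pair the non-planarity of $H$ can still be pushed into a proper minor of $H$ — is the delicate point, and it is precisely here that the connectivity of a critical non-planar graph (and, if necessary, planarity of every single-edge deletion and every contraction of $H$) must be exploited carefully; the $K_{3,3}^-$ case would be treated in parallel, with the part structure of $K_{3,3}$ playing the role of the branch-set adjacencies of $K_5^-$.
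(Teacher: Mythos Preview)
Your reduction via Theorem~\ref{Kfpg} is sound, and the first case is correct: if $e$ (say) has one end in $B_1$ and the other in $B_2$, then $H-f$ carries a $K_5$-- or $K_{3,3}$--minor while being planar by criticality, and the easy direction of Kuratowski gives the contradiction.

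The second case, however, is a genuine gap. When neither $e$ nor $f$ joins $B_1$ to $B_2$, your phrase ``rerouting through the connection that $e$ and $f$ provide between $B_1$ and $B_2$'' is self-contradictory: by hypothesis they provide no such connection. Two-connectivity of $H$ does not rescue this, because any $B_1$--$B_2$ path in $H$ may well run through $B_3\cup B_4\cup B_5$ (indeed $K_5^-$ is already connected), and such a path does \emph{not} upgrade the $J$-minor to a $\bar J$-minor. To make your scheme work you would need a $B_1$--$B_2$ link in $H-e$ or $H-f$ internally disjoint from the other branch sets, and nothing in criticality, $2$-connectivity, or planarity of contractions manufactures one. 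The same objection applies verbatim to the $K_{3,3}^-$ subcase.

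The paper's argument avoids branch sets entirely and is structurally different. It takes $H$ to be a \emph{minimal} critical non-planar graph distinct from $K_5,K_{3,3}$ --- a hypothesis you never invoke --- and argues from the definition of free-planarity: if $H-e-f$ is not free-planar there is a non-edge $h$ (necessarily $h\neq e,f$) with $H-e-f+h$ non-planar, hence $H':=H-e+h$ is non-planar; since $H'-f$ is still non-planar, $H'$ is not itself critical, so some proper subgraph $H''\subsetneq H'$ is critical non-planar. As $H''$ has strictly fewer edges than $H$, minimality forces $H''\in\{K_5,K_{3,3}\}$, with $h\in E(H'')$. One then finishes by the elementary check that a Kuratowski graph cannot be obtained from one of its planar one-edge-deleted subgraphs by adding a \emph{different} edge. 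The leverage here is the edge-swap $H\mapsto H-e+h$ together with minimality; your branch-set approach has no analogue of this step, which is why your second case stalls.
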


\begin{proof}
Let $H$ be minimal non-planar minor distinct from Kuratowski
minors and besides let us assume that it is not free-planar after
deleting some two edges from it. Let us assume these edges be $e$
and $f$. Then there must be an edge $h$ so that $H-e-f+h$ is
non-planar. Then 1) $H-e+h[=H']$ is non-planar; 2) $H'$ minus some
non-empty set of edges is critically non-planar [$=H"$] [and
because of minimality of $H$, $H"$ should be equal to one of the
Kuratowski minors]; 3)$H"-e$ is planar graph such that with $h$
becomes non-planar. Let us imagine in the place of $H"$ be some of
Kuratowski graphs. Then there must be some non-edge $h$ such that
Kuratowski graph without arbitrary edge plus $h$ becomes
non-planar. It is not possible for Kuratowski graph[For $K_5$ it
is trivially, for $K_{3,3}$ after some simple consideration].
Contradiction.
\end{proof}

\begin{proof}[Proof of the Kuratowski theorem]
Let us assume that there is some non-planar minor distinct from
Kuratowski minors. It must be free-planar after reduction of two
edges. Let after removing edge $i$ from $H$ reduced Kuratowski
graph $K_i$ be left undestroyed. Let us choose the next edge $j$
from $K_i$ and after this $K_j$ be left undestroyed. Then after
removing both edges $i$ and $j$ graph must be free planar, i.e.
both $K_i$ and $K_j$ should be destroyed. Followingly, $i$ must
belong to the edges of $K_j$. Let us choose $i$ and $j$ from
r.K.m. $K_{ij}$, where $i$ leaves $K_i$ undestroyed and $j$ --
correspondingly $K_j$. Then, deleting $i$ and $j$ all three
r.K.m's should disappear, but as a consequence edge sets of $K_i$
and $K_j$ must intersect at least in a subset of two edges,  $i$
and $j$. At least two edges are there that do not belong to this
intersection, i.e. $l_i$ from $K_i$ and $l_j$ from $K_j$.
Eliminating edges $l_i$ and $l_j$ all r.K.m's should disappear,
but $K_{ij}$ is left untouched, thus we have come to
contradiction.
\end{proof}

Further we give a proof of the Kuratowski theorem for free-planar
graphs, which serves as a proof for Kuratowski theorem for all
class of planar graphs too.

\begin{thm}[Kuratowski theorem-version 2]\label{dkth}
        $$F(Planar)= \{K_5, K_{3,3}\}.$$
\end{thm}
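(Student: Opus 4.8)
The plan is to mirror the strategy of Theorem~\ref{dkth1}, but to run it ``one level up'': instead of deducing the forbidden minors of $Planar$ from those of $Free(Planar)$ via an ad~hoc reduction, I would re-run the entire chain of arguments that produced Theorem~\ref{Kfpg} (the subgraph-$\xi$ theorem, the $3$-chain corollary, the outer-planar edge theorem, the prism/wheel enumeration and the edge-subdivision theorems), observing at each step that the argument used only local planarity facts and the \emph{weak} direction of Kuratowski (``a graph with a $K_5$ or $K_{3,3}$ minor is non-planar''), which is available unconditionally. The point emphasized in the introduction --- ``with slight alteration this proof fits for a complete class of planar graphs too'' --- is that one simply replaces the reduced Kuratowski minors $K_5^-,K_{3,3}^-$ by the full Kuratowski minors $K_5,K_{3,3}$ throughout, and replaces ``add an arbitrary edge and stay planar'' by ``be planar'', so that the role of $FP=N_\circ(K_5^-,K_{3,3}^-)$ is played by $N_\circ(K_5,K_{3,3})$.

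Concretely, I would proceed as follows. First, set $P=N_\circ(K_5,K_{3,3})$; the weak Kuratowski direction gives immediately $P\supseteq Planar$, i.e.\ $Planar\subseteq P$, since every planar graph excludes $K_5$ and $K_{3,3}$ as minors. The substantive direction is $P\subseteq Planar$: every graph with no $K_5$ and no $K_{3,3}$ minor is planar. For this I would decompose an arbitrary $G\in P$ into its $3$-connected components (the standard Tutte/Hopcroft--Tarjan decomposition), note that the class $P$ is minor-closed so each component again lies in $P$ (as a multigraph), and then classify the $3$-connected members of $P$ exactly as in the proof of Theorem~\ref{pw}: the $3$-chain corollary analogue and the ``$G-e$ outer planar'' analogue force a $3$-connected graph in $P$ to be $K_4$ (now $K_5$ itself is excluded outright, not merely $K_5^-$), a wheel $W_k$, or a prism $\overline{C_6}$ --- all of which are manifestly planar. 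Then I would re-run the edge-subdivision theorems to see which virtual edges may appear, observe that re-assembling planar $3$-connected components along a tree of $2$-cuts yields a planar graph, and conclude $G$ is planar. Finally, minimality: $K_5$ and $K_{3,3}$ are non-planar, and every proper minor of each is planar (a direct check, or: a proper minor of $K_5$ or $K_{3,3}$ has at most one Kuratowski-reduced piece and embeds by Theorem~\ref{Kfpg} together with Lemma~\ref{lx}), so $\{K_5,K_{3,3}\}\subseteq F(Planar)$; combined with $F(Planar)\subseteq P$'s characterization this gives equality.

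An alternative, shorter route --- which I would present if the full re-run is deemed redundant --- is to leverage Theorem~\ref{Kfpg} and Kratochv\'il's theorem in reverse. Since $F(Free(Planar))=\{K_5^-,K_{3,3}^-\}$ and, by the displayed formula, $F(Free(A))=\lfloor F(A)^-\cup F(A)^{\odot}\rfloor$ for minor-closed $A$, I would argue that if $F(Planar)$ were any minor-closed obstruction set $B$ then $B^-\cup B^{\odot}$ must reduce to $\{K_5^-,K_{3,3}^-\}$; tracking which graphs $B$ have the property that every one-edge-deletion is $K_5^-$ or $K_{3,3}^-$ (up to minors) and every vertex-split likewise forces $B\subseteq\{K_5,K_{3,3}\}$, and the weak direction supplies the reverse inclusion. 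This is essentially the lemma-plus-proof pair already used for version~1, transported through Kratochv\'il's identity.

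The main obstacle I anticipate is the $3$-connected decomposition bookkeeping: one must be careful that $P$ is closed under passing to $3$-connected components \emph{as multigraphs} (parallel virtual edges can occur, exactly as flagged before Corollary~\ref{lc}), that re-running the prism/wheel enumeration is legitimate when $K_5,K_{3,3}$ rather than their reductions are forbidden (here the enumeration actually gets \emph{easier}, since more graphs are excluded), and that re-assembly along $2$-cuts genuinely preserves planarity --- the standard but slightly fiddly fact that a graph is planar iff all its $3$-connected components are. Everything else --- the edge-subdivision case analyses and the minimality check for $K_5,K_{3,3}$ --- is routine and parallels arguments already carried out in the paper.
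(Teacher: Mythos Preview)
Your main approach has a fatal orientation error. You claim that replacing the forbidden minors $K_5^-,K_{3,3}^-$ by $K_5,K_{3,3}$ makes the $3$-connected enumeration ``easier, since more graphs are excluded''. The opposite is true: $K_5^-\prec K_5$ and $K_{3,3}^-\prec K_{3,3}$, so excluding $K_5^-,K_{3,3}^-$ is a \emph{stronger} constraint than excluding $K_5,K_{3,3}$, and hence $FP=N_\circ(K_5^-,K_{3,3}^-)\subsetneq N_\circ(K_5,K_{3,3})=P$. The class $P$ is strictly larger, and its $3$-connected members are (a posteriori) all $3$-connected planar graphs, not just wheels and the prism. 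Concretely, every step in the chain you want to re-run breaks: in the $\xi$-theorem and in Lemma~\ref{ls} the contradictions come from producing a $K_{3,3}^-$ or $K_5^-$ minor, which is no contradiction at all in $P$; the $3$-chain corollary, the outer-planar-minus-an-edge theorem, and the prism/wheel classification therefore have no analogue for $P$. Your proposed decomposition into $3$-connected components thus reduces the problem to classifying all $3$-connected graphs with no $K_5,K_{3,3}$ minor --- which is exactly the hard case of Kuratowski's theorem, not a simplification of it.

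The paper's proof of version~2 is an entirely different argument and does not touch the prism/wheel machinery. It is a direct bridge-on-a-cycle analysis: assuming $G$ is $2$-connected, not free-planar, and has no reduced Kuratowski minor, one takes a cycle $C$ with two vertices $x,y$ that are ``screened'' from each other by conflicting bridges (this is what non-free-planarity at the pair $x,y$ amounts to), encodes each bridge by the sextet of its extreme legs relative to $x$ and $y$, and runs a four-case analysis on how two screening bridges $B_x,B_y$ (possibly linked by an alternating chain of non-screening bridges) can overlap. In each case one exhibits a $K_5^-$ or $K_{3,3}^-$ minor explicitly. Adding the edge $xy$ then upgrades this to a full $K_5$ or $K_{3,3}$ minor in $G+xy$, which gives the planar Kuratowski theorem. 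This is the sense of ``with slight alteration'': the same bridge argument simultaneously proves the free-planar and the ordinary Kuratowski statements, not that the earlier structural enumeration transfers. Your alternative route via Kratochv\'{\i}l's identity is closer in spirit to version~1 than to version~2.
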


\begin{proof}

Without loss of generality we suppose that graph $G$ is two-connected.

Let us assume that theorem is not right and $G$ is not free planar and it does not contain reduced Kuratowski  minors. Then there is a cycle $C$ with two vertices $x, y$ on it and at least two bridges $B_x$ and $B_y$ that screen $x$ from $y$ on $C$ and either they are not placeable on one side against $C$ or they are connected [i.e. not placeable together] with an alternating [i.e. on one and other side of $C$] sequence [$B_1,...,B_{2k}, k>0$] of non-screening [$x$ from $y$] bridges. Finding of reduced Kuratowski minors would reprove the incorrectness assumption.

\placedrawing{FIGUREB.LP}{A bridge $[x,a,b,y,c,d]$ with respect to a cycle with two distinguished vertices $x$ and $y$: a) a bridge in general; b)a trivial screening bridge $[F,a,a,F,d,d]$; c) a trivial non-screening bridge $[F,a,b, F,F,F]$; d) edge $x, y$ as a bridge with respect to $C$ $[T,F,F,T,F,F]$.}{figureBlp}

Let us describe bridge with sextet $[x, a, b, y, c, d]$, where values of it are either vertices on the cycle $C$ or logical values $T (=true)$ or $F (=false)$ [see fig. \ref{figureBlp}]:

1) in place of $x (y)$ stands $T$ if $x (y)$ is a leg[i.e. touch vertex to $C$] of the bridge, otherwise $F$;

2) $a (c)$ is nearest leg clockwise from $x (y)$, if different from
$y (x)$, otherwise $F$;

3) $b (d)$ is nearest leg anticlockwise from $y (x)$, if different from $x (y)$, otherwise $F$;

The screening condition of bridge $[x,a,b,y,c,d]$ of $x$ from $y$ on $C$ is -- values $a, b, c, d$ are not $F$. Non-screening bridges $B_i, [0 < i \leq 2k]$ are of a form $[x, a, b, y, F, F]$ or $[x, F, F, y, c, d]$ in general, but taken together with $B_x$ and $B_y$ in place of $x$ and $y$ should stand $F$.

There are three simple [$k = 0$] cases and one non-simple [$k > 1$] case to be considered:

1) In one of bridges, say $B_x$, both in $x$ and $y$ stand $T$. In this case $K_5^-$ arises even when $B_y$ is simple: $[F, a, a, F, c, c]$.

2) If $x (y)$ is $T$ in both $B_x$ and $B_y$, then $K_5^-$ arises too: simplest case -- both bridges are $[T, a, a, F, d, d]$ with minimal number of edges giving $K_5^-$ with subdivided edge [by $y(x)$ on $C$].

3) Bridges $B_x$ of form $[x, a_x, b_x, F, c_x, d_x]$ and $B_y$ of form $[F, a_y, b_y, y, c_y, d_y]$ [where in $x$ and (or) $y$ may stand $F$] are not placeable on one side  when legs' non intersecting condition --existence of two followingly specified  pathes $$x..a_1.b_1.a_2.b_2..y,$$
$$x..d_1.c_1.d_2.c_2..y.$$ -- is not hold.

When this condition is not true, easy checkable $K_{3,3}^-$ arises.

4) Similarly as in case 3 bridges $B_x$ and $B_y$ can not be placed on one side of $C$, if alternating sequence of bridges, say of form,  $[F, a_i, b_i, F, F, F]$ [$0<i \leq 2k$]  join them when the condition -- existence of path
$$x.a_1..b_x. a_2..b_1. a_3.. \ \ ... \ \ . a_{2k}.. b_{2k-1} .a_y..b_{2k}.y$$
-- is hold.

\placedrawing{KURA.LP}{Case 4 in the proof of Kuratowski theorem}{kuralp}

When the bridges joining condition is true, $K_{3,3}^-$ arises [see fig. \ref{kuralp}]:

1) cycle
$$a_y..d_y..c_x..b_x.a_2.b_2.a_4.  \ \ ... \ \ .b_{2k-2}.a_{2k}..b_{2k-1}.a_y;$$

2) a chain through $x$:
$$c_x..x.a_1..b_1.a_3..  \ \ ... \ \ .a_{2k-1}..b_{2k-1};$$

3) a chain through $y$:
$$d_y..y.b_{2k}..a_{2k}  \ \ .$$

\placedrawing{KURA1.LP}{Minor $K_{3,3}$ bold: 1) cycle avoiding $x$ and $y$, 2) chain through $x$ outside and 3) chain through $y$ inside}{kura1lp}

It can be seen from fig. \ref{kuralp}[and fig. \ref{kura1lp} with $K_{3,3}^-$ bold] that both the cycle of supposed $K_{3,3}^-$ and the chain through $y$ goes through even vertices belonging to, say, inner bridges of joining sequence of bridges. The chain through $x$ goes through odd vertices, i.e. outer bridges of the sequence of joining bridges.

Thus $G$ must have reduced Kuratowski graphs as its minors and $G
+ xy$ correspondingly -- Kuratowski graph as its minor. This
completes the proof of the Kuratowski theorem .

\end{proof}

It is easy to see that case $3$ in the last proof is not necessary, i.e. it is equal
to case $4$ with $k=0$.

% \subsection*{Acknowledgment}

\end{document}